\newcolumntype{F}{>{$}c<{\hspace{-0.9ex}$}}
\newcolumntype{:}{>{$}m{0.8ex}<{$}}
\newcolumntype{R}{>{$}r<{$}}
\newcolumntype{C}{>{$}c<{$}}
\newcolumntype{L}{>{$}l<{$}}
\newcolumntype{N}{@{}>{$}l<{$}}
\newlength\horspace
\newcommand{\h}[1][1.0]{\hspace*{#1\horspace}}
\newlength\verspace
\tikzset{iso/.style={draw=none,every to/.append style={edge node={node [sloped, allow upside down, auto=false]{$\cong$}}}}}
\tikzset{adjunction/.style={draw=none,every to/.append style={edge node={node [sloped, allow upside down, auto=false]{$\dashv$}}}}}
\tikzset{adj/.style={draw=none,every to/.append style={edge node={node [sloped, allow upside down, auto=false]{$\dashv$}}}}}
\tikzset{simeq/.style={draw=none,every to/.append style={edge node={node [sloped, allow upside down, auto=false]{$\simeq$}}}}}
\tikzset{simeqS/.style={draw=none,every to/.append style={edge node={node [sloped, allow upside down, auto=false]{$\raisebox{0.8em}{$\simeq$}$}}}}}
\tikzset{aiso/.style={simeqS,preaction={draw,->}}}
\tikzset{proarrowS/.style={draw=none,every to/.append style={edge node={node [sloped, allow upside down, auto=false]{\raisebox{1.4pt}{\small$\shortmid$}}}}}}
\tikzset{proarrow/.style={proarrowS,preaction={draw,->}}}
\tikzset{nullS/.style={draw=none,every to/.append style={edge node={node [sloped, allow upside down, auto=false]{\raisebox{-1.16 ex}{$\circ$}}}}}}
\tikzset{null/.style={nullS,preaction={draw,->}}}
\tikzset{dotdot/.style={dash pattern=on 0.25ex off 0.2ex, dash phase=0ex}}
\tikzset{RightA/.style={double distance=3.5pt,>={Implies},->},%
	triple/.style={-,preaction={draw,RightA}},%
	quadruple/.style={preaction={draw,RightA,shorten >=0pt},shorten >=1pt,-,double,double distance=0.2pt}}
\tikzset{Right/.style={double distance=1.7pt,>={Implies},->}}
\tikzset{simeqSRight/.style={draw=none,every to/.append style={edge node={node [sloped, allow upside down, auto=false]{$\raisebox{-1em}{\rotatebox{180}{$\simeq$}}$}}}}}
\tikzset{twoiso/.style={simeqSRight,preaction={draw,Right}}}
\theoremstyle{plain}
\newtheorem{theorem}{Theorem}[section]
\newtheorem{lemma}[theorem]{Lemma}
\newtheorem{proposition}[theorem]{Proposition}
\newtheorem{prop}[theorem]{Proposition}
\theoremstyle{definition}
\newtheorem{definition}[theorem]{Definition}
\newtheorem{remark}[theorem]{Remark}
\newtheorem{example}[theorem]{Example}
\newtheorem{construction}[theorem]{Construction}
\newtheorem{notation}[theorem]{Notation}
\def\nameit#1{\textrm{#1}~}
\def\thex{\nameit{Theorem}}
\def\prox{\nameit{Proposition}}
\def\lemx{\nameit{Lemma}}
\def\remx{\nameit{Remark}}
\def\conx{\nameit{Construction}}
\def\dfn#1{{\itshape #1}}
\NewDocumentEnvironment{cd}{s O{6} O{6} b}{%
	\IfBooleanF{#1}{\begin{equation*}}\begin{tikzcd}[row sep=#2ex,column sep=#3ex,ampersand replacement=\&]
			#4
		\end{tikzcd}\IfBooleanF{#1}{\end{equation*}}\ignorespacesafterend}{}
\newenvironment{fun}{\[\begin{tabular}{F:RCL}}{\end{tabular}\]\ignorespacesafterend}
\newenvironment{eqD*}{\begin{equation*}}{\end{equation*}\ignorespacesafterend}
\newcommand{\C}{\mathbb{C}}
\newcommand{\D}{\mathbb{D}}
\newcommand{\E}{\mathcal{E}}
\newcommand{\M}{\mathcal{M}}
\newcommand{\N}{\mathcal{N}}
\newcommand{\T}{\mathcal{T}}
\newcommand{\F}{\mathcal{F}}
\newcommand{\Ab}{\mathbf{Ab}}
\newcommand{\Ses}[1]{\operatorname{Ses}\left({#1}\right)}
\newcommand{\ses}[1]{\Ses{#1}}
\newcommand{\BiHPTors}{\mathbf{BiHPTors}}
\NewDocumentCommand{\Alg}{t+ t' m}{
	\ensuremath{\IfBooleanT{#1}{\mathbf{Ps}\mbox{-}}{#3}\mbox{-}\mathbf{\IfBooleanT{#2}{Co}Alg}}
}
\newcommand{\ClIdl}{\mathbf{ClIdl}}
\newcommand{\ClIdlex}{\mathbf{ClIdl}^{\operatorname{ex}}}
\newcommand{\Oex}{\Omega^{\operatorname{ex}}}
\def\:{\colon}
\def\c{\circ}
\newcommand{\iso}{\cong}
\def\phi{\varphi}
\newcommand{\cont}{\subseteq}
\newcommand{\id}[1]{\operatorname{id}_{#1}}
\newcommand{\Id}[1]{\operatorname{Id}_{#1}}
\newcommand{\x}[1][]{\times_{#1}}
\newcommand{\too}{\longrightarrow}
\newcommand{\mto}{\mapsto}
\renewcommand{\ar}[2][]{\xrightarrow[#1]{#2}}
\newcommand{\aR}[2][]{%
	\ext@arrow 0055{\Rightarrowfill@}{#1}{#2}}
\def\xLongrightarrowfill@{\arrowfill@\Relbar\Relbar\Longrightarrow}
\newcommand{\am}[2][]{%
	\ext@arrow 0395\xmapstofill@{#1}{#2}}
\def\xlongmapstofill@{\arrowfill@\relbar\relbar\longmapsto}
\def\xlongrightarrowfill@{\arrowfill@\relbar\relbar\longrightarrow}
\newcommand{\aarr}[2][]{%
	\ext@arrow 0099\xlongrightarrowfill@{#1}{#2}}
\newcommand{\eqq}{\DOTSB\protect\Relbar\protect\joinrel\Relbar}
\def\xeqqfill@{\arrowfill@\Relbar\Relbar\eqq}
\newcommand{\aeqq}[2][]{%
	\ext@arrow 0099\xeqqfill@{#1}{#2}}
\def\xRrightarrowfill@{\arrowfill@\equiv\equiv\Rrightarrow}
\newcommand{\aM}[2][]{\ext@arrow 0359\xRrightarrowfill@{#1}{#2}}    
\def\nullrightarrowfill@{\arrowfill@{\relbar{\circ}}\relbar\longrightarrow}
\newcommand{\anull}[2][]{
	\ext@arrow0099\nullrightarrowfill@{#1}{#2}}
\newcommand{\aiso}[1]{\overset{#1}{\iso}}
\newcommand{\aequi}{\ensuremath{\stackrel{\raisebox{-1ex}{\kern-.3ex$\scriptstyle\sim$}}{\rightarrow}}}
\newcommand{\aequii}{\ensuremath{\stackrel{\raisebox{-1ex}{\kern-.3ex$\scriptstyle\sim$}}{\longrightarrow}}}
\newcommand{\ito}{\hookrightarrow}
\newcommand{\PB}[1]{\arrow[#1,phantom,"\scalebox{1.6}{\color{black}$\lrcorner$}",very near start]}
\newcommand{\scaleu}[2][1.2]{{\scalebox{#1}{$#2$}}}
\NewDocumentCommand{\fib}{O{n} O{2.3} mmm}{%
	\begin{cd}*[#2][5]
		{#3}\ifx#1n{\arrow[d,"{\,\scaleu{#4}}"]}\else{\ifx#1i{\arrow[d,hookrightarrow,"{\,\scaleu{#4}}"]}\else{\ifx#1e{\arrow[d,equal,"{\,\scaleu{#4}}"]}\else{\ifx#1R{\arrow[d,Rightarrow,"{\,\scaleu{#4}}"]}\fi}\fi}\fi}\fi\\
		{#5}\ifx#1o{\arrow[u,"{\,\scaleu{#4}}"']}\fi
	\end{cd}\xspace
}
\newcommand{\Ar}[4][]{\arrow[#2,"{#3}"{#1},""{name=#4, anchor=center}]}
\newcommand{\Ars}[4][]{\arrow[#2,"{#3}"'{#1},""{name=#4, anchor=center}]}
\newcommand{\Arb}[6][]{\arrow[#2,"{#3}"{#1},from=#4,to=#5,shorten <= #6 em, shorten >= #6 em]}
\newcommand{\Arbs}[6][]{\arrow[#2,"{#3}"'{#1},from=#4,to=#5,shorten <= #6 em, shorten >= #6 em]}
\NewDocumentCommand{\tc}{s t+ O{7} O{30} O{} O{} O{} o}{
	\def\footc##1##2##3##4##5{%
		\FPmul\Mulresulttwo{#3}{#3}%
		\FPmul\Mulresult{0.0026}{\Mulresulttwo}%
		\IfBooleanTF{#1}{\begin{cd}*}{\begin{cd}}[#3][#3]
				{##1}\Ar[#5]{r,bend left=#4}{##3}{A}\Ars[#6]{r,bend right=#4}{##4}{B}\&{##2}
				\IfBooleanTF{#2}{\Arb[description,pos=0.49]}{\Arb}{Rightarrow #7}{\mkern1mu {##5}}{A}{B}{\IfNoValueTF{#8}{\Mulresult}{#8}}
		\end{cd}}%
		\footc}
\NewDocumentCommand{\tcv}{s t' O{5} O{38} mmmmm}{
		\FPmul\Mulresulttwo{#3}{#3}%
		\FPmul\Mulresult{0.0026}{\Mulresulttwo}%
		\IfBooleanTF{#1}{\begin{cd}*}{\begin{cd}}[#3][#3]
				{#5}\IfBooleanTF{#2}{\Ars{d,leftarrow,bend right=#4}{#7}{A}\Ar{d,leftarrow,bend left=#4}{#8}{B}}{\Ars{d,bend right=#4}{#7}{A}\Ar{d,bend left=#4}{#8}{B}}\\{#6}
				\Arb{Rightarrow}{#9}{A}{B}{\Mulresult}
		\end{cd}}
\NewDocumentCommand{\sq}{s O{n} O{6} O{6} O{} O{2.7} O{2.2} O{0.5} O{n}}{%
			\def\foosq##1##2##3##4##5##6##7##8{%
				\IfBooleanTF{#1}{\begin{cd}*}{\begin{cd}}[#3][#4]
						{##1}\ifx#2p{\PB{rd}}\fi\arrow[r,"{##5}"]\ifx#9l{\arrow[d,equal,"{##6}"']}\else{\arrow[d,"{##6}"']}\fi\&{##2}\ifx#9r{\arrow[d,equal,"{##7}"]}\else{\arrow[d,"{##7}"]}\fi\ifx#2l{\arrow[ld,Rightarrow,shorten <=#6ex,shorten >=#7ex,"{#5}"{pos=#8}]}\fi \ifx#2i{\arrow[ld,twoiso,shorten <=#6ex,shorten >=#7ex,"{#5}"{pos=#8}]}\fi\\
						{##3}\ifx#9d{\arrow[r,equal,"{##8}"']}\else{\arrow[r,"{##8}"']}\fi\ifx#2o{\arrow[ur,Rightarrow,shorten <=#6ex,shorten >=#7ex,"{#5}"{pos=#8}]}\fi\&{##4}
				\end{cd}}%
				\foosq}
\NewDocumentCommand{\tr}{s O{4.5} O{6.5} O{0} O{0} O{n} O{0} O{} O{0}}{%
				\def\footr##1##2##3##4##5##6{%
					\IfBooleanTF{#1}{\begin{cd}*}{\begin{cd}}[#3][#2]
							{##1}\arrow[rr,"{##4}"]
							\Ars[inner sep =0.2ex]{dr}{##5}{A}\&[#4ex]\&[#5ex]{##2}\Ar[inner sep =0.2ex]{ld}{##6}{B}\\
							\&{##3}
							\ifx#6l{\Arb{Rightarrow,shift right=#7em}{#8}{A}{B}{#9}}\else{\ifx#6o{\Arbs{Rightarrow,shift right=#7em}{#8}{B}{A}{#9}}\else{\ifx#6i{\Arbs[inner sep=0.9ex]{iso,shift right=#7em}{#8}{A}{B}{#9}}\else{\ifx#6e{\Arb{equal,shift right=#7em}{#8}{A}{B}{#9}}\else{}\fi}\fi}\fi}\fi
					\end{cd}}%
					\footr}
\renewcommand{\ker}[1]{\operatorname{ker}(#1)}
\newcommand{\coker}[1]{\operatorname{coker}(#1)}
\begin{document}
					
\title{Comonadic approach to pretorsion theories}
					
\author{Elena Caviglia}
\address{(Elena Caviglia) Department of Mathematical Sciences, Stellenbosch University, South Africa. National Institute for Theoretical and Computational Sciences (NITheCS), Stellenbosch, South Africa.}
\email{elena.caviglia@outlook.com}
					
\author{Zurab Janelidze}
\address{(Zurab Janelidze) Department of Mathematical Sciences, Stellenbosch University, South Africa.
						National Institute for Theoretical and Computational Sciences (NITheCS), Stellenbosch, South Africa.}
\email{zurab@sun.ac.za}
					
\author{Luca Mesiti}
\address{(Luca Mesiti) Department of Mathematical Sciences, Stellenbosch University, South Africa.}
\email{luca.mesiti@outlook.com}
					
\subjclass{18E40,18C15,18N10}

\keywords{Torsion theory, pretorsion theory, comonad, short exact sequence}

\begin{abstract} 
    We present a comonadic approach to pretorsion theories on semiexact categories, i.e.\ categories equipped with a closed ideal of null morphisms that admits all kernels and all cokernels. We first prove that bihereditary pretorsion theories are comonadic in a 2-dimensional sense over the 2-category of semiexact categories with naturally chosen 1-cells. We then extend the built pseudo-comonad to guarantee that all pretorsion theories are pseudo-coalgebras. But interestingly, not all pseudo-coalgebras are pretorsion theories. Rather, pseudo-coalgebras give a generalized notion of pretorsion theory.
\end{abstract}
					
\maketitle
					
\setcounter{tocdepth}{1}
\tableofcontents

\section{Introduction} 


Following the framework of \cite{FFG21}, a \emph{pretorsion theory} on a category $\mathcal{C}$ consists of a pair $(\mathcal{T},\mathcal{F})$ of full replete subcategories that satisfy the following axioms. Denote by
\[
\mathcal{Z} = \mathcal{T} \cap \mathcal{F}
\]
the intersection of the two classes, and let $\mathcal{N}$ be the ideal of morphisms in $\mathcal{C}$ that factor through an object of $\mathcal{Z}$. The defining properties of a pretorsion theory are then expressed relative to this ideal. Firstly, the interaction between the two classes is trivial modulo $\mathcal{N}$:
\begin{itemize}
\item[(T1)] Any morphism whose domain lies in $\mathcal{T}$ and whose codomain lies in $\mathcal{F}$ belongs to $\mathcal{N}$; equivalently, every such morphism factors through an object of $\mathcal{Z}$.
\end{itemize}
Secondly, every object of the ambient category admits a canonical decomposition with respect to $(\mathcal{T},\mathcal{F})$:
\begin{itemize}
\item[(T2)] For each object $C \in \mathcal{C}$, there exists a sequence
\[
T \longrightarrow C \longrightarrow F
\]
that is short exact relative to the ideal $\mathcal{N}$, in the sense that $T \to C$ is a kernel of $C \to F$ and, conversely, $C \to F$ is a cokernel of $T \to C$.
\end{itemize}
Here, kernels and cokernels are understood in the relative sense determined by $\mathcal{N}$. These notions are standard and well developed; we refer to \cite{FFG21} and the references therein for details.

Pretorsion theories significantly extend the classical concept of torsion theories in abelian categories, originally introduced in \cite{D66}. Several intermediate generalisations have appeared in the literature (see, for example, \cite{GJ20,FFG21}). One of the main outcomes of \cite{FFG21} is that pretorsion theories strike a balance between generality and structure: despite their minimal axiomatic basis, they allow for the derivation of many nontrivial properties, while at the same time encompassing a wide and diverse range of examples.

In this paper we are concerned with a special type of pretorsion theories: those where the underlying category together with the ideal $\mathcal{N}$ forms a semiexact category in the sense of \cite{G13}. We are thus within the scope of torsion theories studied in \cite{GJ20}, which is still a fairly wide scope, although it misses some of the exotic examples discussed in \cite{FFG21}. We may interchangeably refer to these torsion theories as \emph{pretorsion theories in semiexact categories} or simply \emph{pretorsion theories}.

Despite an extensive literature on pretorsion theories, including various different subtypes of pretorsion theories, abundant examples, properties and characterizations, little is known about the category of pretorsion theories and appropriate functors between them. Our joint work \cite{CavJanMes25Rec} is among the very first papers that investigate structures for a suitable category of pretorsion theories. There, we presented a monadic approach to capture a special type of torsion theories in pointed categories, that is interestingly connected with rectangular bands from semigroup theory. More precisely, we proved that the category of rectangular torsion theories, i.e.\ those torsion theories $(\C, \T,\F)$ for which the canonical functor $\C \to \T \x \F$ is an equivalence of categories, is equivalent to the category of pseudo-algebras for the squaring 2-monad on the category of pointed categories. This partial monadicity result led us to the topic of the present paper: a comonadic approach to the study of a much wider class of pretorsion theories.

In this paper, we prove two different comonadicity results for pretorsion theories. The first one (\thex \ref{theorbih}) exhibits a pseudo-comonad on the 2-category of semiexact categories and functors preserving kernels and cokernels, whose pseudo-coalgebras are precisely bihereditary pretorsion theories. The second one (\thex \ref{theoromegaex}) extends this pseudo-comonad to one on the 2-category of semiexact categories and functors preserving exact sequences. We prove that all pretorsion theories on semiexact categories are pseudo-coalgebras for the extended pseudo-comonad. But interestingly, there are pseudocoalgebras that are not pretorsion theories in the usual sense. Indeed, the pseudo-coalgebras give a generalization of pretorsion theories that we plan to investigate further in future work.

\begin{notation}
    Whenever we say \emph{pseudo-coalgebra} for a 2-monad or a pseudo-comonad, we mean a normal pseudo-coalgebra.

    Whenever $(\C,\T,\F)$ is a pretorsion theory, we fix a choice of short exact sequences associated to every object of $\C$. 
\end{notation}

\section{The category of short exact sequences}

The content and many of the results of this section are immediate from \cite{G13}, and many aspects of such results follow from more general results (some of which are contained in \cite{G13}). We redevelop them in detail, though, for the sake of completeness.  

We study the category $\ses{\C}$ of short exact sequences in a category $\C$ equipped with a closed ideal of null-morphisms $\N$. We present a useful characterization of kernels and cokernels, and provide an explicit construction. We use this to completely characterize and study short exact sequences in $\Ses{\C}$.

Finally, we exhibit a pretorsion theory on $\ses{\C}$, that will later correspond to the cofree bihereditary pretorsion theory on $\C$.

The content of this section will give the fundamental building blocks for the construction of the pseudo-comonads of the following sections.

Throughout the paper $\C$ will be a category equipped with a closed ideal $\N$ of null morphisms that has all $\N$-kernels and $\N$-cokernels. The category of short exact sequences in $\C$ is defined as follows.

\begin{definition}
We define $\ses{\C}$ to be the category given by the following:
\begin{description}
    \item[an object of $\ses{\C}$ is] a short exact sequence in $\C$;
    \item[a morphism in $\ses{\C}$ from $X \ar{f} Y  \ar{g} Z$ to $X' \ar{f'} Y'  \ar{g'} Z'$ is] a triple $(u,v,w)$ of morphisms in $\C$ making the following diagram commute
    \begin{cd}[6]
	X \& Y \& Z \\
X' \& Y' \& Z'
	\arrow["f", from=1-1, to=1-2]
\arrow["g", from=1-2, to=1-3]
	\arrow["u"', from=1-1, to=2-1]
	\arrow["v"', from=1-2, to=2-2]
	\arrow["w", from=1-3, to=2-3]
	\arrow["f'"', from=2-1, to=2-2]
	\arrow["g'"', from=2-2, to=2-3]
\end{cd}
\end{description}
Composition and identities are then defined using the ones of $\C$ on every component of the triples. 
\end{definition}

\begin{remark}
    In all the diagrams involving morphisms of short exact sequences throughout the paper, the objects of $\ses{\C}$ will be written orizontally while the morphisms will be triples of vertical arrows.
\end{remark}

We can equip the category $\ses{\C}$ with a closed ideal of null morphisms constructed from the closed ideal $\N$. 

\begin{definition}\label{def[N]}
    The closed ideal $[\N]$ is the ideal of $\ses{\C}$ generated by the short exact sequences of the form 
    $$X \aiso{f} Y  \aiso{g} Z$$
    where both morphisms are isomorphisms in $\C$.
    \end{definition}

\begin{remark}\label{remcharnull}
It is straightforward to see that given a short exact sequence $X \aiso{f} Y  \aiso{g} Z$ the objects $X, Y$ and $Z$ are null objects in $\C$. Indeed, all $\N$-kernels and $\N$-cokernels of isomorphisms are null objects. Conversely, a short exact sequence involving three null objects must of the form $\bullet \aiso{} \bullet  \aiso{} \bullet$.
\end{remark}

We prove a characterization of kernels, cokernels and short exact sequences in $\ses{\C}$. Before that, we recall the following well-known facts.

\begin{lemma}\label{lemmino}
    Let $\C$ be a category with a closed ideal of null morphisms $\N$. The following fact hold:
    \begin{itemize} 
    \item[(i)] the kernel of any isomorphism is a null object, and the dual holds for cokernels of isomorphisms; 
    \item[(ii)] the kernel of a null morphism is the identity, and, dually, the cokernel of a null morphism is the identity.
    \end{itemize}
We thus have the following canonical short exact sequences associated to any object $Y \in \C$:
$$K(\id{Y}) \ar{\ker{\id{Y}}} Y \aeqq{} Y \qquad \text{and} \qquad Y\aeqq{} Y\ar{\coker{\id{Y}}} C(\id{Y})$$
\end{lemma}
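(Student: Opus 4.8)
The plan is to prove \lemx \ref{lemmino} directly from the definition of a closed ideal of null morphisms, since both statements are essentially formal consequences of the universal properties of $\N$-kernels and $\N$-cokernels once we know which morphisms are null. Recall that for a closed ideal $\N$, a morphism $f\: X \to Y$ is an $\N$-kernel of $g \: Y \to Z$ when $g\c f \in \N$ and $f$ is universal among such morphisms, and dually for $\N$-cokernels; moreover closedness guarantees that objects whose identity morphism is null behave like null objects, and that isomorphisms compose well with $\N$.

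For part (i), let $h\: Y \to Z$ be an isomorphism and let $k \: K \to Y$ be its $\N$-kernel. I would first observe that $\id{K}$ is a null morphism: since $h\c k \in \N$ and $h$ is invertible, $k = h^{-1}\c(h\c k) \in \N$ (here using that $\N$ is an ideal, closed under pre- and post-composition); then as $k$ is both monic (being a kernel) — or more directly, by factoring $\id{K}$ through $k$ via the universal property applied to $k$ itself — one gets $\id{K} \in \N$, so $K$ is a null object. I would spell this out using only that $k\c\id{K} = k$ and the defining universal property of the kernel to produce the unique factorization exhibiting $\id{K}$ as null. The cokernel statement is exactly dual, replacing $\N$-kernels by $\N$-cokernels and reversing all arrows.

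For part (ii), let $n \: X \to Y$ be a null morphism. Its $\N$-kernel is a morphism $k \: K \to X$ with $n \c k \in \N$ and universal; but $n \c \id{X} = n \in \N$ already, so $\id{X} \: X \to X$ is a candidate, and any two candidates receive a unique comparison, which forces $\id{X}$ itself to be (isomorphic to) the $\N$-kernel of $n$. Hence the kernel of a null morphism is the identity; dually the cokernel of a null morphism is the identity. The displayed short exact sequences associated to $Y$ then follow immediately: by (ii), $\id{Y}$ is its own cokernel, so $\ker{\id{Y}} \: K(\id{Y}) \to Y$ followed by $\id{Y} \: Y \to Y$ is short exact (the kernel is tautologically a kernel, and $\id{Y}$ is a cokernel of it because the composite $\id Y \c \ker{\id Y}$ is null, so the cokernel of $\ker{\id Y}$ is the identity again by (ii) — or one checks directly); symmetrically for the second sequence.

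I do not expect any serious obstacle here; the only mild subtlety is being careful about the precise definition of ``closed ideal'' being used — in particular whether ``null object'' is defined as an object with null identity or via a terminal/initial-type condition — and making sure the short manipulations $k = h^{-1}\c(h\c k)$ and the uniqueness-of-comparison arguments are phrased in terms of exactly the axioms available from \cite{G13}. Since the excerpt explicitly flags these as ``well-known facts'', the write-up can be kept short, citing \cite{G13} for the background notions and giving the one-line arguments above.
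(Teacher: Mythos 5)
The paper itself gives no proof of this lemma --- it is recorded as a recollection of well-known facts from \cite{G13} --- so there is no argument in the paper to compare against, and your write-up must stand on its own. Your part (ii) and the derivation of the two canonical short exact sequences are correct: $\id{X}$ tautologically satisfies the universal property of the $\N$-kernel of a null morphism $n$ (every $m$ with $n\c m\in\N$ --- which is automatic --- factors through $\id{X}$ as itself), and since $\id{Y}\c\ker{\id{Y}}=\ker{\id{Y}}$ is null, (ii) gives that its cokernel is $\id{Y}$, which is exactly the short exactness of the first displayed sequence; the second is dual.

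Your proof of (i), however, has a genuine gap at the decisive step. From $h\c k\in\N$ and invertibility of $h$ you correctly get $k=h^{-1}\c(h\c k)\in\N$. But neither of the two justifications you then offer yields $\id{K}\in\N$. Monicity of $k$ together with $k\in\N$ does not imply $\id{K}\in\N$ for a general ideal: in the poset $\{0<1\}$ with $\N=\{0\to 1\}$ (an ideal that is not closed), the morphism $0\to 1$ is a monic $\N$-kernel of the isomorphism $\id{1}$ and lies in $\N$, yet $\id{0}\notin\N$. And applying the universal property of $k$ to $k$ itself only returns the tautological factorization $k\c\id{K}=k$, which says nothing about nullity of $\id{K}$. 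The missing step is precisely where closedness of $\N$ must enter: since $k\in\N$ and $\N$ is closed, $k$ factors as $K\ar{p}N\ar{j}Y$ with $\id{N}\in\N$; then $h\c j=h\c j\c\id{N}\in\N$, so $j=k\c e$ for a unique $e\:N\to K$; now $k\c(e\c p)=j\c p=k$ forces $e\c p=\id{K}$ by monicity of $k$, and hence $\id{K}=e\c\id{N}\c p\in\N$, so $K$ is a null object. You flag the definition of ``closed ideal'' as a subtlety to check, but the argument as written never actually invokes closedness at the one point where it is indispensable; without it the statement is simply false, as the poset example shows.
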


\begin{prop}\label{propcharkerses}
Let $(u,v,w)$ and $(u',v',w')$ be composable morphisms in $\ses{\C}$ as in the following diagram
\begin{eqD*}
{\begin{cd}*[6]
	X \& Y \& Z \\
	{X'} \& {Y'} \& {Z'} \\
	{X''} \& {Y''} \& {Z''}
	\arrow["f", from=1-1, to=1-2]
	\arrow["u"', from=1-1, to=2-1]
	\arrow["g", from=1-2, to=1-3]
	\arrow["v"', from=1-2, to=2-2]
	\arrow["w", from=1-3, to=2-3]
	\arrow["{f'}"', from=2-1, to=2-2]
	\arrow["{u'}"', from=2-1, to=3-1]
	\arrow["{g'}"', from=2-2, to=2-3]
	\arrow["{v'}"', from=2-2, to=3-2]
	\arrow["{w'}", from=2-3, to=3-3]
	\arrow["{f''}"', from=3-1, to=3-2]
	\arrow["{g''}"', from=3-2, to=3-3]
\end{cd}}
\end{eqD*}
The morphism $(u,v,w)$ is the $[\N]$-kernel of $(u',v',w')$ if and only if $u$ is the $\N$-kernel of $u'$ and $v$ is the $[\N]$-kernel of $v'$. Dually, the morphism $(u',v',w')$ is the 
$[\N]$-cokernel of $(u,v,w)$ if and only if $v$ is the $\N$-cokernel of $v$ and $w'$ is the $[\N]$-kernel of $w$.

So, the morphisms $(u,v,w)$ and $(u',v',w')$ form a short exact sequence in $\ses{\C}$ precisely when $Y \ar{v} Y' \ar{v'} Y''$ is a short exact sequence in $\C$, $u=\ker{u'}$ and $w'=\coker{w}$.
\end{prop}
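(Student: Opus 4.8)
The plan is to establish the $[\N]$-kernel statement by exhibiting an explicit $[\N]$-kernel of an arbitrary morphism of $\ses{\C}$ and then reading the characterisation off from it; the $[\N]$-cokernel statement is formally dual, and the last assertion is the conjunction of the two, since the conditions $v=\ker{v'}$ and $v'=\coker{v}$ together say exactly that $Y\xrightarrow{v}Y'\xrightarrow{v'}Y''$ is short exact in $\C$. I would begin by recording a characterisation of $[\N]$: a morphism $(a,b,c)$ of $\ses{\C}$ lies in $[\N]$ if and only if its middle component $b$ lies in $\N$. For the forward implication, unwinding \defx\refs{def[N]} such a morphism factors through some $X_1\aiso{}X_2\aiso{}X_3$ with all maps isomorphisms, so $b$ factors through $X_2$, which is a null object by \remx\refs{remcharnull}. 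For the converse, if $b$ factors through a null object $N$, then by \lemx\refs{lemmino} the triple $N\aeqq{}N\aeqq{}N$ is a short exact sequence with isomorphisms (the identities) as its maps, hence an object generating $[\N]$, and one checks that $(a,b,c)$ factors through it using only the universal properties of the relevant $\N$-kernel $f'$ and $\N$-cokernel; the one non-formal point is that the outer components of the factorising composite are forced to coincide with $a$ and $c$, both being the \emph{unique} factorisation of $f'a$ through $f'$, respectively of $g'b$ through the appropriate $\N$-cokernel. More generally, this rigidity --- a morphism of short exact sequences is completely determined by its middle component, its outer components being forced factorisations through an $\N$-kernel and an $\N$-cokernel --- is the underlying reason the proposition holds, and I would use it throughout.

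Next, given $(u',v',w')\colon S'\to S''$ with $S'=(X'\xrightarrow{f'}Y'\xrightarrow{g'}Z')$, I would construct a candidate $[\N]$-kernel: put $v:=\ker{v'}\colon Y\to Y'$, then $f:=\ker{g'v}\colon X\to Y$ and $g:=\coker{f}\colon Y\to Z$, so that $X\xrightarrow{f}Y\xrightarrow{g}Z$ is short exact by construction. Since $g'vf\in\N$ and $g'f'\in\N$, the universal properties of $f'=\ker{g'}$ and of $g=\coker{f}$ yield unique morphisms $u\colon X\to X'$ with $f'u=vf$ and $w\colon Z\to Z'$ with $wg=g'v$, hence a morphism $(u,v,w)\colon(X\to Y\to Z)\to S'$ of $\ses{\C}$. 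To verify that $(u,v,w)$ is the $[\N]$-kernel of $(u',v',w')$: its composite with $(u',v',w')$ has middle component $v'v\in\N$, so lies in $[\N]$ by the characterisation above; and any $(a,b,c)\colon T\to S'$ with $(u',v',w')(a,b,c)\in[\N]$ has $v'b\in\N$, so $b$ factors uniquely through $v=\ker{v'}$, and the remaining components of the lift are then determined uniquely via $f=\ker{g'v}$ and via the cokernel map of $T$, giving the required unique lift $T\to(X\to Y\to Z)$. A similar computation identifies the $u$ constructed here with an $\N$-kernel of $u'$.

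It remains to deduce the characterisation. Since $[\N]$-kernels are unique up to unique isomorphism over $S'$, every $[\N]$-kernel of $(u',v',w')$ is isomorphic over $S'$ to the one built above, so its first component is an $\N$-kernel of $u'$ and its middle component an $\N$-kernel of $v'$. Conversely, if $(u,v,w)\colon(X\xrightarrow{f}Y\xrightarrow{g}Z)\to S'$ is a morphism of short exact sequences with $u=\ker{u'}$ and $v=\ker{v'}$, then $f$ is forced --- it must be the unique factorisation of $f'u$ through $v$ --- and hence so are $Z$, $g=\coker{f}$, and $w$; so $(u,v,w)$ agrees, up to isomorphism over $S'$, with the constructed $[\N]$-kernel, and is itself an $[\N]$-kernel. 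The $[\N]$-cokernel statement is proved dually, and intersecting the two characterisations gives the final claim. I expect the only genuine obstacle to be bookkeeping: before each appeal to a universal property one must check that the relevant composite is $\N$-null, so that the factorisation exists and is unique --- there is no deeper difficulty.
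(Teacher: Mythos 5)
Your proposal is correct, but it is organised quite differently from the paper's own argument. The paper proves the biconditional head-on: for the ``only if'' direction it tests the universal property of the $[\N]$-kernel against the degenerate short exact sequences $K(\id{M})\to M= M$ and $L= L\to C(\id{L})$ of \lemx\ref{lemmino} to extract, in turn, the universal properties of $v=\ker{v'}$ and $u=\ker{u'}$; for the ``if'' direction it verifies the universal property componentwise; and only afterwards, in \prox\ref{kersesconstr}, does it use the characterisation to build an explicit kernel. You invert this order: you construct the explicit kernel first (your $v=\ker{v'}$, $f=\ker{g'\c v}$, $g=\coker{f}$ agrees up to canonical isomorphism with the paper's $K(u')\to K(v')\to C(f)$) and then deduce both directions from uniqueness of kernels up to isomorphism over the codomain. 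Your route buys two things. First, it isolates and actually proves the characterisation that a morphism of $\ses{\C}$ lies in $[\N]$ exactly when its middle component lies in $\N$ --- a fact the paper uses silently each time it invokes the kernel's universal property against a test morphism whose composite is only known to have null middle component. Second, the rigidity observation (a morphism of short exact sequences is determined by its middle component, the outer ones being forced through a mono kernel and an epi cokernel) does in one stroke the bookkeeping that the paper's proof repeats for each of the three components. The one step you should not wave through is ``$X\ar{f}Y\ar{g}Z$ is short exact by construction'': this needs the standard lemma that an $\N$-kernel is the kernel of its own cokernel (if $f=\ker{h}$ and $g=\coker{f}$, then $h$ factors through $g$, so anything annihilated by $g$ is annihilated by $h$ and hence factors uniquely through $f$); the paper proves the analogous exactness by hand inside \prox\ref{kersesconstr}. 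With that supplied, and with the routine check that your induced $u$ is indeed an $\N$-kernel of $u'$ (which you flag), the argument is complete and yields the cokernel statement by duality and the final claim by conjunction, exactly as you say.
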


\begin{proof}
    We start by proving that if $(u,v,w)=\ker{(u',v',w')}$ then $u=\ker{u'}$ and $v=\ker{v'}$. 

    Let $M \ar{l} Y$ be a morphism such that the composite $M \ar{l} Y \ar{v'}$ is a null morphism. We need to show that there exists a unique morphism $M \ar{d} Y$ such that $v\c d=l$.
To induce such a morphism $d$, we consider the short exact sequence
$$K(\id{M}) \ar{\ker{\id{M}}} M \aeqq{} M$$
and we apply the universal property of $(u,v,w)$ as $[\N]$-kernel of $(u',v',w')$ as in the following diagram 
\begin{cd}[5][6]
	{K(\id{M})} \&\& M \&\& M \\
	\& X \&\& Y \&\& Z \\
	\& {X'} \&\& {Y'} \&\& {Z'} \\
	\& {X''} \&\& {Y''} \&\& {Z''}
	\arrow["{\ker{\id{M}}}", from=1-1, to=1-3]
	\arrow["{\exists ! c}", dashed, from=1-1, to=2-2]
	\arrow["t"{description}, bend right=15, from=1-1, to=3-2]
	\arrow[equals, from=1-3, to=1-5]
	\arrow["{\exists ! d}", dashed, from=1-3, to=2-4]
	\arrow["l"{description, pos=0.7}, bend right=15, from=1-3, to=3-4]
	\arrow["{\exists ! e}", dashed, from=1-5, to=2-6]
	\arrow["{g' \circ l}"{description, pos=0.7}, bend right=15, from=1-5, to=3-6]
	\arrow["f", from=2-2, to=2-4]
	\arrow["u"', from=2-2, to=3-2]
	\arrow["g", from=2-4, to=2-6]
	\arrow["v", from=2-4, to=3-4]
	\arrow["w", from=2-6, to=3-6]
	\arrow["{f'}"', from=3-2, to=3-4]
	\arrow["{u'}"', from=3-2, to=4-2]
	\arrow["{g'}"', from=3-4, to=3-6]
	\arrow["{v'}", from=3-4, to=4-4]
	\arrow["{w'}", from=3-6, to=4-6]
	\arrow["{f''}"', from=4-2, to=4-4]
	\arrow["{g''}"', from=4-4, to=4-6]
\end{cd}
Here the morphism $t$ is the unique morphism induced by the universal property of $f'$ as kernel of $g'$ applied to the null morphism $g'\circ \ker{\id{M}}$ and thus the triple $(t,l,g'\circ l)$ is a morphism in $\ses{\C}$ by construction. The induced morphism $M \ar{d} Y$ is such that $v\c d=l$. Moreover, its uniqueness follows by the uniqueness of the morphism $(c,d,e)$ in $\ses{C}$. Indeed, given a morphism $d'$ such that $v\circ d'=l$, the triple $(c',d',g\c d')$, where $c'$ is induced by the universal property of $f'$ as kernel of $g'$ for the null morphism $g\c d'$, is another morphism in $\ses{\C}$ that fits into the previous diagram. So, by uniqueness of $(c,d,e)$ it must be $d=d'$. This shows that $v=\ker{v'}$. 

To prove that $u=\ker{u'}$, let us now consider a morphism $L \ar{q} X'$ such that $u'\circ q$ is a null morphism. We need to show that there exists a unique morphism $L \ar{a} Y$ such that $u\c a=q$. To induce such a morphism, we consider the short exact sequence
$$L\aeqq{} L \ar{\coker{\id{L}}} C(\id{L})$$
and we apply the universal property of $(u,v,w)$ as $[\N]$-kernel of $(u',v',w')$ as in the following diagram
\begin{cd}[5][6]
    L \&\& L \&\& {C(\id{L})} \\
	\& X \&\& Y \&\& Z \\
	\& {X'} \&\& {Y'} \&\& {Z'} \\
	\& {X''} \&\& {Y''} \&\& {Z''}
	\arrow[equals, from=1-1, to=1-3]
	\arrow["{\exists ! a}", dashed, from=1-1, to=2-2]
	\arrow["q"{description}, bend right=15, from=1-1, to=3-2]
	\arrow["{\coker{\id{L}}}", from=1-3, to=1-5]
	\arrow["{\exists ! b}", dashed, from=1-3, to=2-4]
	\arrow["{f' \circ q}"{description, pos=0.7}, bend right=15, from=1-3, to=3-4]
	\arrow["{\exists ! c}", dashed, from=1-5, to=2-6]
	\arrow["r"{description, pos=0.7}, bend right=15, from=1-5, to=3-6]
	\arrow["f", from=2-2, to=2-4]
	\arrow["u"', from=2-2, to=3-2]
	\arrow["g", from=2-4, to=2-6]
	\arrow["v", from=2-4, to=3-4]
	\arrow["w", from=2-6, to=3-6]
	\arrow["{f'}"', from=3-2, to=3-4]
	\arrow["{u'}"', from=3-2, to=4-2]
	\arrow["{g'}"', from=3-4, to=3-6]
	\arrow["{v'}", from=3-4, to=4-4]
	\arrow["{w'}", from=3-6, to=4-6]
	\arrow["{f''}"', from=4-2, to=4-4]
	\arrow["{g''}"', from=4-4, to=4-6]
\end{cd}
Here the morphism $r$ is the unique morphism induced by the universal property of $g'$ as cokernel of $f'$ applied to the null morphism $r \circ \coker{\id{M}}$ and thus the triple $(q,f'\circ q,r)$ is a morphism in $\ses{\C}$ by construction. The induced morphism $L \ar{a} X$ is such that $u\c a=q$ by construction. Moreover, its uniqueness follows by the uniqueness of the morphism $(a,b,c)$ in $\ses{C}$ analogously to what we showed in the proof that $v=\ker{v'}$. This conclude the proof that $u=\ker{u'}$. 

We now prove that if $u=\ker{u'}$ and $v=\ker{v'}$ in $\C$ then $(u,v,w)=\ker{(u',v',w')}$ in $\ses{\C}$. Let $L \ar{l} M \ar{r} R$ be a short exact sequence in $\C$ and let $(q,s,t)$ be a morphism in $\ses{\C}$ as in the diagram below such that the composite $(u',v',w') \c (q,s,t)$ is in $[\N]$. We need to prove that there exist a unique morphism $(a,b,c)$ is $\ses{\C}$ that fits in the following diagram
\begin{cd}[5][6]
    L \&\& M \&\& R \\
	\& X \&\& Y \&\& Z \\
	\& {X'} \&\& {Y'} \&\& {Z'} \\
	\& {X''} \&\& {Y''} \&\& {Z''}
	\arrow["l", from=1-1, to=1-3]
	\arrow["{\exists ! a}", dashed, from=1-1, to=2-2]
	\arrow["q"{description}, bend right=15, from=1-1, to=3-2]
	\arrow["r", from=1-3, to=1-5]
	\arrow["{\exists ! b}", dashed, from=1-3, to=2-4]
	\arrow["s"{description, pos=0.7}, bend right=15, from=1-3, to=3-4]
	\arrow["{\exists ! c}", dashed, from=1-5, to=2-6]
	\arrow["t"{description, pos=0.7}, bend right=15, from=1-5, to=3-6]
	\arrow["f", from=2-2, to=2-4]
	\arrow["u"', from=2-2, to=3-2]
	\arrow["g", from=2-4, to=2-6]
	\arrow["v", from=2-4, to=3-4]
	\arrow["w", from=2-6, to=3-6]
	\arrow["{f'}"', from=3-2, to=3-4]
	\arrow["{u'}"', from=3-2, to=4-2]
	\arrow["{g'}"', from=3-4, to=3-6]
	\arrow["{v'}", from=3-4, to=4-4]
	\arrow["{w'}", from=3-6, to=4-6]
	\arrow["{f''}"', from=4-2, to=4-4]
	\arrow["{g''}"', from=4-4, to=4-6]
\end{cd}
Since $u=\ker{u'}$, we can induce $a$ by applying the universal property of the kernel. Analogously, we can induce $b$ via the universal property of $v=\ker{v'}$. To induce $c$, we then use the fact that $r=\coker{l}$ and that the morphism $g\c b\c l=g \c f \c a$ is null. And the uniqueness of the triple $(a,b,c)$ directly follows from the uniqueness of the three morphisms. So we conclude that $(u,v,w)=\ker{(u',v',w')}$. 

The proof of the dual statement for cokernels can be easily obtained by dualizing this proof. 

Finally, the characterization of short exact sequences immediately follows by putting together the characterizations of kernels and cokernels.
\end{proof}

We now give an explicit construction for kernels and cokernels in $\ses{\C}$.

\begin{prop}\label{kersesconstr}
Let $(u',v',w')$ be a morphism in $\ses{\C}$ as in the diagram below. Then the kernel of $(u',v',w')$ can be constructed as in the following diagram   
\begin{cd}
    {K(u')} \& {K(v')} \& {C(f)} \\
	{X'} \& {Y'} \& {Z'} \\
	{X''} \& {Y''} \& {Z''}
	\arrow["{\exists ! f}", dashed, from=1-1, to=1-2]
	\arrow["{\ker{u'}}"', from=1-1, to=2-1]
	\arrow["{\coker{f}}", from=1-2, to=1-3]
	\arrow["{\ker{v'}}", from=1-2, to=2-2]
	\arrow["{\exists ! w}", dashed, from=1-3, to=2-3]
	\arrow["{f'}"', from=2-1, to=2-2]
	\arrow["{u'}"', from=2-1, to=3-1]
	\arrow["{g'}"', from=2-2, to=2-3]
	\arrow["{v'}", from=2-2, to=3-2]
	\arrow["{w'}", from=2-3, to=3-3]
	\arrow["{f''}"', from=3-1, to=3-2]
	\arrow["{g''}"', from=3-2, to=3-3]
\end{cd}
    where $f$ is the unique morphism induced by the universal property of $\ker{v'}$ applied to the null morphism $f'' \c u' \c \ker{u'}$ and $w$ is the unique morphism induced by the universal property of $\coker{f}$ applied to the null morphism $g' \c \ker{v'} \c f=g' \c f' \c \ker{u'}$.

    And the dual statement holds for cokernels.
\end{prop}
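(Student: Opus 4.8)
The plan is to derive the statement from the characterisation of $[\N]$-kernels obtained in Proposition~\ref{propcharkerses}. That proposition tells us that a morphism $(u,v,w)$ of $\ses{\C}$ is the $[\N]$-kernel of $(u',v',w')$ as soon as it is a genuine morphism in $\ses{\C}$ whose first component is the $\N$-kernel of $u'$ and whose second component is the $\N$-kernel of $v'$. In the displayed diagram the first two components are $\ker{u'}$ and $\ker{v'}$ by construction, so everything reduces to verifying that the triple $(\ker{u'},\ker{v'},w)$ really is a morphism in $\ses{\C}$, i.e.\ that the top row is a short exact sequence in $\C$ and that the two squares commute; Proposition~\ref{propcharkerses} will then conclude. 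The statement for cokernels will follow by dualising the whole construction.

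Commutativity of the squares is immediate from the definitions: the left square $\ker{v'}\c f=f'\c\ker{u'}$ is the defining equation of $f$, and the right square $w\c\coker f=g'\c\ker{v'}$ is the defining equation of $w$ --- which makes sense because $g'\c\ker{v'}$ annihilates $f$, since $g'\c\ker{v'}\c f=g'\c f'\c\ker{u'}$ and $g'\c f'$ is null. Moreover, by construction the second map of the top row is the $\N$-cokernel of the first, so that row is a short exact sequence precisely when $f$ is a normal monomorphism, i.e.\ $f=\ker{\coker f}$.

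The only nontrivial point is therefore this normal-mono claim, which I would establish by identifying $f$ with a concrete kernel: $f=\ker{(g'\c\ker{v'})}$. One half is easy, since $g'\c\ker{v'}\c f=g'\c f'\c\ker{u'}$ is null and hence $f$ factors through $p:=\ker{(g'\c\ker{v'})}$. For the other half, from $p$ one gets that $g'\c\ker{v'}\c p$ is null, so $\ker{v'}\c p$ factors through $f'=\ker{g'}$, say $\ker{v'}\c p=f'\c q$; then $f''\c u'\c q=v'\c f'\c q=v'\c\ker{v'}\c p$ is null, and since $f''=\ker{g''}$ reflects null morphisms --- if $m$ is an $\N$-kernel and $m\c h\in\N$, then $h\in\N$, which follows easily from closedness of $\N$ --- the morphism $u'\c q$ is null, so $q$ factors through $\ker{u'}$, and this produces the desired factorisation of $p$ through $f$. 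As $f$ is a monomorphism ($\ker{v'}\c f=f'\c\ker{u'}$ is a composite of monomorphisms), the two factorisations are mutually inverse isomorphisms of subobjects, so $f$ is the $\N$-kernel of $g'\c\ker{v'}$, and in particular $f=\ker{\coker f}$.

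I expect this identification of $f$ as a normal monomorphism to be the main obstacle: in a general semiexact category the morphism induced between two kernels need not be a normal monomorphism, so one must genuinely exhibit $f$ as an explicit kernel and use the fact that $\N$-kernels reflect null morphisms. Everything else is a routine chase with the universal properties of $\N$-kernels and $\N$-cokernels, and the cokernel half of the proposition is obtained by a verbatim dualisation.
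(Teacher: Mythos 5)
Your proof is correct and follows essentially the same route as the paper: both reduce the statement to Proposition~\ref{propcharkerses}, so that the only real work is showing $f=\ker{\coker{f}}$, and both do this by the same diagram chase (factor the relevant test morphism through $f'=\ker{g'}$, then use that $f''$ reflects null morphisms to factor through $\ker{u'}$). The only cosmetic difference is that you package this as the identification $f\cong\ker{g'\c\ker{v'}}$ followed by the standard fact that a kernel is the kernel of its own cokernel, whereas the paper verifies the universal property of $f$ against $\coker{f}$ directly on an arbitrary test morphism.
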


\begin{proof}
    We first prove that 
    $$ K(u') \ar{f} K(v') \ar{\coker{f}} C(f)$$
    is a short exact sequence in $\C$. To do so, we need to show that $f$ is the kernel of $\coker{f}$. Let $M \ar{m} K(v')$ be a morphism in $\C$ such that $\coker{f} \c m$ is a null morphism. We need to show that there exists a unique morphism $L \ar{l} K(u')$ such that $f \c l= m$.
    We induce such a morphism $l$ as in the following diagram 
    \begin{cd}[4.5][6]
        M \\[-3ex]
	\& {K(u')} \&\& {K(v')} \&\& {C(f)} \\
	\& {X'} \&\& {Y'} \&\& {Z'} \\
	\& {X''} \&\& {Y''} \&\& {Z''}
	\arrow["{\exists ! l}", dashed, from=1-1, to=2-2]
	\arrow["m", bend left=15, from=1-1, to=2-4]
	\arrow["{\exists ! s}"', bend right=15, dashed, from=1-1, to=3-2]
	\arrow["{f}", from=2-2, to=2-4]
	\arrow["{\ker{u'}}", from=2-2, to=3-2]
	\arrow["{\coker{f}}", from=2-4, to=2-6]
	\arrow["{\ker{v'}}", from=2-4, to=3-4]
	\arrow["{w}", from=2-6, to=3-6]
	\arrow["{f'}"', from=3-2, to=3-4]
	\arrow["{u'}"', from=3-2, to=4-2]
	\arrow["{g'}"', from=3-4, to=3-6]
	\arrow["{v'}", from=3-4, to=4-4]
	\arrow["{w'}", from=3-6, to=4-6]
	\arrow["{f''}"', from=4-2, to=4-4]
	\arrow["{g''}"', from=4-4, to=4-6]
    \end{cd}
    Here $s$ is the unique morphism induced by the universal property of $f'=\ker{g'}$ applied to the null morphism $g' \c \ker{v'} \c m=w \c \coker{f} \c m$. And $l$ is the unique morphism induced by the universal property of $\ker{u'}$ applied to the null morphism $u' \c s$. Notice that $u' \c s$ is null since $f'' \c u' \c s=v' \c \ker{v'} \c m$ is null and $f''$ reflects null morphisms because it is a kernel. The fact that $f \c l= m$ then follows from the chain of equalities $$\ker{v'} \c m= f' \c s= f' \c \ker{u'} \c l= \ker{v'} \c f \c l$$
    thanks to the fact that $\ker{v'}$ is a monomorphism. Moreover, the uniqueness of $l$ as morphism such that $f \c l= m$ follows by the uniqueness of $s$ and the uniqueness of $l$ as morphism such that $\ker{u'} \c l=s$. This conclude the proof that $f= \ker{\coker{f}}$ and so $ K(u') \ar{f} K(v') \ar{\coker{f}} C(f)$
    is a short exact sequence.
By \prox\ref{propcharkerses}, we conclude that $(\ker{u'}, \ker{v'}, w)$ is the kernel of $(u',v',w')$. And the dual statement for cokernels can be proved analogously.
\end{proof}

We have thus proved the following result.

\begin{prop}\label{propsesN}
 Let $\C$ be a category equipped with a closed ideal $\N$ of null morphisms that has all $\N$-kernels and $\N$-cokernels. Then the category $\ses{\C}$ equipped with the closed ideal $[\N]$ of Definition \ref{def[N]} has all $[\N]$-kernels and $[\N]$-cokernels.
\end{prop}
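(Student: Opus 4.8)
The plan is to obtain this statement with essentially no extra work, by reading it off the explicit constructions of \prox\ref{kersesconstr} (which themselves rest on the characterization in \prox\ref{propcharkerses}). First I would fix an arbitrary morphism $(u',v',w')$ of $\ses{\C}$ and simply check that every ingredient of the kernel diagram in \prox\ref{kersesconstr} is available under the standing hypotheses: the $\N$-kernels $\ker{u'}$ and $\ker{v'}$ exist because $\C$ has all $\N$-kernels; the induced morphism $f\colon K(u')\to K(v')$ then exists by the universal property of $\ker{v'}$; its $\N$-cokernel $\coker{f}\colon K(v')\to C(f)$ exists because $\C$ has all $\N$-cokernels; and the induced morphism $w\colon C(f)\to Z'$ exists by the universal property of $\coker{f}$. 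Hence the whole diagram can be formed, and \prox\ref{kersesconstr} already certifies that the resulting triple $(\ker{u'},\ker{v'},w)$ is the $[\N]$-kernel of $(u',v',w')$. Since $(u',v',w')$ was arbitrary, $\ses{\C}$ has all $[\N]$-kernels.

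Next I would run the dual argument: for an arbitrary morphism $(u,v,w)$ of $\ses{\C}$, the dual half of \prox\ref{kersesconstr} builds its $[\N]$-cokernel out of the $\N$-cokernels $\coker{v}$ and $\coker{w}$ together with one further induced $\N$-kernel, all of which exist by hypothesis. This yields all $[\N]$-cokernels in $\ses{\C}$, and combining the two halves completes the proof.

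I do not expect any genuine obstacle here; the substantive arguments were already carried out in \prox\ref{propcharkerses} and \prox\ref{kersesconstr}. The only things to be careful about are bookkeeping: confirming that none of the universal properties invoked in those constructions require anything beyond the existence of $\N$-kernels and $\N$-cokernels in $\C$ (which is exactly the standing assumption on $\C$), and recalling that $[\N]$ is indeed a closed ideal of $\ses{\C}$, which is already built into \defx\ref{def[N]}. So the proof is really just the observation that the constructions of the previous two propositions go through for every morphism of $\ses{\C}$.
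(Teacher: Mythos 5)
Your proposal is correct and is essentially identical to the paper's own treatment: the paper states this proposition with the remark ``We have thus proved the following result,'' i.e.\ it too regards it as an immediate corollary of the explicit constructions in \prox\ref{kersesconstr} (resting on \prox\ref{propcharkerses}), all of whose ingredients exist under the standing hypothesis that $\C$ has all $\N$-kernels and $\N$-cokernels.
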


We now define a pretorsion theory on the category $\ses{\C}$.

\begin{prop}\label{proppretorsses}
    Let $\T$ be the class of short exact sequences in $\C$ of the form $\bullet \aiso{} \bullet \to\bullet$ and let $\F$ be the class of short exact sequences in $\C$ of the form $\bullet \to \bullet \aiso{}\bullet$. Then $(\C,\T,\F)$ is a pretorsion theory on $\ses{\C}$. 
\end{prop}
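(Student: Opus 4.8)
The plan is to verify the defining data of a pretorsion theory on $\ses{\C}$ for the pair $(\T,\F)$: that $\T,\F$ are full replete subcategories whose intersection $\mathcal{Z}:=\T\cap\F$ induces exactly the ideal $[\N]$, and then axioms (T2) and (T1). First I would note that fullness is immediate and that repleteness holds since an isomorphism $(u,v,w)$ of $\ses{\C}$ has $u,v,w$ isomorphisms in $\C$, so the first (resp.\ last) morphism of a short exact sequence is an isomorphism precisely when that of an isomorphic sequence is. Next, $\mathcal{Z}$ is exactly the class of short exact sequences $X\aiso{}Y\aiso{}Z$ with both morphisms isomorphisms, i.e.\ the generators of $[\N]$ from Definition~\ref{def[N]}. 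By Remark~\ref{remcharnull} these are the short exact sequences on three null objects, and since a retract of a null object is null, any object of $\ses{\C}$ whose identity factors through one of them has all three components null, hence lies in $\mathcal{Z}$ by Remark~\ref{remcharnull} again. Thus $\mathcal{Z}$ is precisely the class of $[\N]$-null objects, and the ideal it generates — which is the ideal associated with $(\T,\F)$ in the sense of the definition of a pretorsion theory — equals $[\N]$.

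\textbf{Axiom (T2).} For an object $S=(X\ar{f}Y\ar{g}Z)$ of $\ses{\C}$ I would take as canonical decomposition the two canonical short exact sequences of Lemma~\ref{lemmino},
\[
T:=\bigl(X\aeqq{}X\ar{\coker{\id{X}}}C(\id{X})\bigr)\in\T,\qquad F:=\bigl(K(\id{Z})\ar{\ker{\id{Z}}}Z\aeqq{}Z\bigr)\in\F,
\]
joined by the triples $(\id{X},f,w_1)\colon T\to S$ and $(u_2,g,\id{Z})\colon S\to F$, where $w_1\colon C(\id{X})\to Z$ and $u_2\colon X\to K(\id{Z})$ are the unique factorizations of the null morphism $g\c f$ through $\coker{\id{X}}$ and through $\ker{\id{Z}}$ respectively. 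One checks directly that both triples are morphisms of $\ses{\C}$. Their middle row is $X\ar{f}Y\ar{g}Z$, a short exact sequence; $u_2$ is null since its codomain $K(\id{Z})$ is a null object (Lemma~\ref{lemmino}(i)), so $\id{X}=\ker{u_2}$, and dually $w_1$ is null, so $\id{Z}=\coker{w_1}$. By Proposition~\ref{propcharkerses}, $T\to S\to F$ is a short exact sequence in $\ses{\C}$, which is (T2).

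\textbf{Axiom (T1).} Let $(u,v,w)\colon S\to S'$ with $S=(X\ar{f}Y\ar{g}Z)\in\T$ and $S'=(X'\ar{f'}Y'\ar{g'}Z')\in\F$. Since $f$ is an isomorphism, $g=\coker f$ is a cokernel of an isomorphism, so $Z$ is a null object (Lemma~\ref{lemmino}(i)) and $g,w$ are null; since $g'$ is an isomorphism, $f'=\ker{g'}$ has null domain $X'$ and is itself null. The key is to factor the \emph{whole} triple through a single object of $\mathcal{Z}$, and I would use $\bigl(Z\aeqq{}Z\aeqq{}Z\bigr)\in\mathcal{Z}$ (legitimate because $Z$ is null), writing
\[
S\ \ar{(g\c f,\,g,\,\id{Z})}\ \bigl(Z\aeqq{}Z\aeqq{}Z\bigr)\ \ar{(p',\,(g')^{-1}\c w,\,w)}\ S',
\]
where $p'\colon Z\to X'$ is the unique factorization of $(g')^{-1}\c w$ through $f'=\ker{g'}$, which exists because $g'\c(g')^{-1}\c w=w$ is null. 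Both triples are morphisms of $\ses{\C}$ by inspecting their squares, and their composite is $(u,v,w)$: the last component is $w$; the middle is $(g')^{-1}\c w\c g=(g')^{-1}\c g'\c v=v$ using $g'\c v=w\c g$; and for the first, $f'\c p'\c g\c f=(g')^{-1}\c w\c g\c f=v\c f=f'\c u$, whence $p'\c g\c f=u$ since $f'$ is a monomorphism. Hence $(u,v,w)\in[\N]$, establishing (T1).

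\textbf{Main obstacle.} Repleteness, the identification of $\mathcal{Z}$, and (T2) are routine bookkeeping. The delicate point is (T1): it is not enough that the three components of $(u,v,w)$ be individually null in $\C$, since $[\N]$ requires a factorization through one short exact sequence of isomorphisms that is moreover a morphism of $\ses{\C}$ on both sides. The content of the argument is choosing that sequence correctly — $\bigl(Z\aeqq{}Z\aeqq{}Z\bigr)$, read off from the null tail of the object of $\T$ — and checking, via the universal property of $\ker{g'}$, that all three components factor through it coherently.
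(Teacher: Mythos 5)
Your proof is correct and follows essentially the same route as the paper: the (T2) decomposition via the canonical sequences $X\aeqq{}X\to C(\id{X})$ and $K(\id{Z})\to Z\aeqq{}Z$ glued by the factorizations of the null morphism $g\c f$ is identical to the paper's, and your (T1) argument differs only in the dual choice of the null object to factor through ($Z\aeqq{}Z\aeqq{}Z$, the null tail of the source, where the paper uses $X'\aeqq{}X'\aeqq{}X'$, the null head of the target), with the same universal-property and mono/epi bookkeeping. Your explicit treatment of repleteness and of the identification of $\T\cap\F$ with the generators of $[\N]$ is a point the paper leaves implicit, but it is routine and does not change the substance.
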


\begin{proof}
    Let $(u,v,w)$ be a morphism in $\ses{\C}$ as in the following diagram
    \begin{cd}
        X \& Y \& Z \\
	{X'} \& {Y'} \& {Z'}
	\arrow["f"', from=1-1, to=1-2,aiso]
	\arrow["u"', from=1-1, to=2-1]
	\arrow["g"', from=1-2, to=1-3]
	\arrow["v", from=1-2, to=2-2]
	\arrow["w", from=1-3, to=2-3]
	\arrow["{f'}"', from=2-1, to=2-2]
	\arrow["{g'}"', from=2-2, to=2-3,aiso]
    \end{cd}
    from a short exact sequence in $\T$ to one in $\F$. Then $(u,v,w)$ factors through the null short exact sequence given by the identity of $X'$. Indeed, the following diagram commutes 
    \begin{cd}
      X \& Y \& Z \\
	{X'} \& {X'} \& {X'} \\
	{X'} \& {Y'} \& {Z'}
	\arrow["f"', from=1-1, to=1-2,aiso]
	\arrow["u"', from=1-1, to=2-1]
	\arrow["u"{description, pos=0.6}, bend right=35, from=1-1, to=3-1]
	\arrow["g"', from=1-2, to=1-3]
	\arrow["{u \circ f^{-1}}", from=1-2, to=2-2]
	\arrow["v"{description, pos=0.7}, bend right=35, from=1-2, to=3-2]
	\arrow["{\exists ! a}", dashed, from=1-3, to=2-3]
	\arrow["w"{description, pos=0.7}, bend right=35, from=1-3, to=3-3]
	\arrow[equals, from=2-1, to=2-2]
	\arrow[equals, from=2-1, to=3-1]
	\arrow[equals, from=2-2, to=2-3]
	\arrow["{f'}", from=2-2, to=3-2]
	\arrow["{g'\circ f'}", from=2-3, to=3-3]
	\arrow["{f'}"', from=3-1, to=3-2]
	\arrow["{g'}"', from=3-2, to=3-3,aiso]
    \end{cd}
    Here $a$ is the unique morphism induced by the universal property of $g=\coker{f}$ applied to the null morphism $u$. Notice that $w= g' \c f' \c a$ because they are equal when precomposed by $g$, which is a cokernel and thus it is an epimorphism. So we have proved that the morphism $(u,v,w)$ is in $[\N]$. 

    Let now $X \ar{f} Y \ar{g} Z$ be an object of $\ses{\C}$. We can associate to it the short exact sequence
    \begin{cd}[5][7]
        X \& X \& {C(\id{X})} \\
	X \& Y \& Z \\
	{K(\id{Z})} \& Z \& Z
	\arrow[equals, from=1-1, to=1-2]
	\arrow[equals, from=1-1, to=2-1]
	\arrow["{\coker{\id{X}}}", from=1-2, to=1-3]
	\arrow["f", from=1-2, to=2-2]
	\arrow["{\exists ! w}", dashed, from=1-3, to=2-3]
	\arrow["f", from=2-1, to=2-2]
	\arrow["{\exists ! u}"', dashed, from=2-1, to=3-1]
	\arrow["g", from=2-2, to=2-3]
	\arrow["g", from=2-2, to=3-2]
	\arrow[equals, from=2-3, to=3-3]
	\arrow["{\ker{\id{Z}}}"', from=3-1, to=3-2]
	\arrow[equals, from=3-2, to=3-3]
    \end{cd}
    where the morphism $u$ is induced by the universal property of $\ker{\id{Z}}$ applied to the null morphism $g \c f$ and the morphism $w$ is induced by the universal property of $\coker{\id{X}}$ applied to the null morphism $g \c f$. Since $u$ is a null morphism (as it factors through the null object $K(\id{Z})$, we have that $\ker{u}=\id{X}$. 
    The sequence in $\Ses{C}$ drawn above is a short exact sequence in $\Ses{C}$, thanks to the explicit characterization of \prox\ref{propcharkerses}. Indeed, the middle column is a short exact sequence (it actually coincides with the starting one). And, since $K(\id{Z})$ and $C(\id{X})$ are null objects, the left column is a kernel and the right column is a cokernel, thanks to \lemx\ref{lemmino}.
\end{proof}

\section{A comonad for bihereditary pretorsion theories}

In this section, we make the first step towards capturing pretorsion theories via a comonadic approach. We construct a pseudo-comonad whose pseudo-coalgebras are bihereditary pretorsion theories. In the following section, we will then extend this pseudo-comonad to guarantee that all pretorsion theories are pseudo-coalgebras.

We start by recalling what a (co)hereditary pretorsion theory is.

\begin{definition}
    A pretorsion theory $(\C,\T,\F)$ is called \dfn{hereditary} if the functor $\C \ar{T} \T \ito \C$ preserves kernels, and is called \dfn{cohereditary} if it has the dual property that $\C \ar{F} \F \ito \C$ preserves cokernels. A pretorsion theory is called bihereditary if it is both hereditary and cohereditary.
\end{definition}

\begin{example}
    The classical torsion theory $(\Ab, \{\text{torsion abelian groups}\}, \{\text{torsion-free abelian groups}\})$ is hereditary but not cohereditary.
\end{example}

\begin{example}
    It is straightforward to prove that all rectangular torsion theories, defined in our joint work \cite{CavJanMes25Rec}, are bihereditary. Interestingly, as we proved in \cite{CavJanMes25Rec}, rectangular torsion theories can be captured as pseudo-algebras for a 2-monad on the 2-category of pointed categories and functors that preserve the zero object. In this section we will prove that they are also pseudo-coalgebras for a pseudo-comonad.
\end{example}

\begin{example}
    As shown in \cite{FFG21},
    given a  finite preordered set $(X=\{1,2,\ldots,n\}, \leq)$ the triple $(X,T,F)$ with $T,F\cont X$ is a pretorsion theory iff 
\begin{itemize}
\item[(i)] $T \cup F=X$;
\item[(ii)]  $1\in T$ and $n\in F$;
\item[(ii)]  for every $i=1,\ldots, n-1$ if $i\in T$ and $i+1\in F$, then either $i\in F$ or $i+1\in T$.
\end{itemize}
It is easy to prove that all such pretorsion theories are bihereditary.
\end{example}

\begin{proposition}
    The pretorsion theory on $\Ses{\C}$ that we built in \prox\ref{proppretorsses} is bihereditary.
\end{proposition}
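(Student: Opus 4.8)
The plan is to write down the torsion and torsion-free functors of this pretorsion theory explicitly and then deduce both heredity and coheredity directly from the characterisation of $[\N]$-kernels and $[\N]$-cokernels in \prox\ref{propcharkerses}. From the canonical short exact sequence exhibited in the proof of \prox\ref{proppretorsses}, the torsion-part functor $T\:\ses{\C}\to\T\ito\ses{\C}$ sends an object $X\ar{f}Y\ar{g}Z$ to the short exact sequence $X\aeqq{}X\ar{\coker{\id{X}}}C(\id{X})$, whose \emph{first two objects coincide} (both equal to $X$), and it sends a morphism $(p,q,r)$ of $\ses{\C}$ to a triple whose first two components are both equal to the first component $p$ of $(p,q,r)$ --- the second component being forced to equal the first by the commuting left-hand square of any morphism of short exact sequences. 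Dually, the torsion-free-part functor $F\:\ses{\C}\to\F\ito\ses{\C}$ sends $X\ar{f}Y\ar{g}Z$ to $K(\id{Z})\ar{\ker{\id{Z}}}Z\aeqq{}Z$, whose last two objects coincide (both equal to $Z$), and sends $(p,q,r)$ to a triple whose last two components are both equal to the last component $r$.

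For heredity, I would take a morphism $\phi=(p,q,r)$ in $\ses{\C}$ together with its $[\N]$-kernel $\kappa$. Applying the kernel half of \prox\ref{propcharkerses} to $\kappa$ and $\phi$, the first component of $\kappa$ is the $\N$-kernel $\ker{p}$ of $p$ in $\C$. Hence $T(\phi)$ has first and second components both equal to $p$, while $T(\kappa)$ has first and second components both equal to $\ker{p}$, simply because the first two objects of $T$ applied to an object agree. Feeding this back into the kernel half of \prox\ref{propcharkerses} --- the two equalities it demands of a candidate kernel both being the single, automatically true identity $\ker{p}=\ker{p}$, and $T$ visibly sending morphisms in $[\N]$ to morphisms in $[\N]$ (a short exact sequence of null objects goes to one of null objects), so that the relevant composite is null --- we conclude that $T(\kappa)$ is the $[\N]$-kernel of $T(\phi)$. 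Thus $T$ preserves $[\N]$-kernels and the pretorsion theory is hereditary. Coheredity follows by the dual argument, replacing $T$ by $F$, first two components by last two components, and the kernel half of \prox\ref{propcharkerses} by its cokernel half; combining the two, the pretorsion theory is bihereditary.

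The one part deserving a little care is the explicit description of $T$ and $F$ in the first paragraph: checking that $C(\id{X})$ and $K(\id{Z})$ really are null objects, so that the images of $T$ and $F$ land in $\T$ and $\F$ respectively (this is \lemx\ref{lemmino}), and checking that the action of $T$ on a morphism --- determined by the universal property relating $T(S)$ to the canonical short exact sequence of $S$ --- has its second component equal to its first precisely because of the commuting left-hand square (dually for $F$). Once these formulas are recorded there is no real obstacle: the substance of the argument is the trivial observation that, since the first two objects of $T(S)$ are literally the same object, both of the conditions that \prox\ref{propcharkerses} places on a candidate $[\N]$-kernel collapse to one and the same, already-verified equation (and, dually, for $F$ and $[\N]$-cokernels).
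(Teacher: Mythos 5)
Your proposal is correct and follows essentially the same route as the paper: apply the torsion-part functor $T$ to a kernel diagram, observe that the first two components of $T$ on objects and morphisms coincide, and invoke \prox\ref{propcharkerses} so that both required conditions collapse to the single identity $u=\ker{u'}$ already guaranteed by the hypothesis, with the dual argument for $F$ and cokernels. Your extra remarks (why the second component of $T$ on a morphism equals the first, and why $T$ preserves $[\N]$ so the composite is null) are sound points of care that the paper leaves implicit.
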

\begin{proof}
    Let $(u,v,w)$ be the kernel of $(u',v',w')$ as in the following diagram.
    \begin{eqD*}
{\begin{cd}*[3][3]
	X \& Y \& Z \\
	{X'} \& {Y'} \& {Z'} \\
	{X''} \& {Y''} \& {Z''}
	\arrow["f", from=1-1, to=1-2]
	\arrow["u"', from=1-1, to=2-1]
	\arrow["g", from=1-2, to=1-3]
	\arrow["v"', from=1-2, to=2-2]
	\arrow["w", from=1-3, to=2-3]
	\arrow["{f'}"', from=2-1, to=2-2]
	\arrow["{u'}"', from=2-1, to=3-1]
	\arrow["{g'}"', from=2-2, to=2-3]
	\arrow["{v'}"', from=2-2, to=3-2]
	\arrow["{w'}", from=2-3, to=3-3]
	\arrow["{f''}"', from=3-1, to=3-2]
	\arrow["{g''}"', from=3-2, to=3-3]
\end{cd}}
\end{eqD*}
The functor $T\: \ses{\C} \to \T_{\ses{\C}}$ applied to this kernel produces
\begin{cd}
    X \& X \& {C(\id{X})} \\
	{X'} \& {X'} \& {C(\id{X'})} \\
	{X''} \& {X''} \& {C(\id{X''})}
	\arrow[equals, from=1-1, to=1-2]
	\arrow["u"', from=1-1, to=2-1]
	\arrow["{\coker{\id{X}}}", from=1-2, to=1-3]
	\arrow["u", from=1-2, to=2-2]
	\arrow["{\exists ! a}", from=1-3, to=2-3]
	\arrow[equals, from=2-1, to=2-2]
	\arrow["{u'}"', from=2-1, to=3-1]
	\arrow["{\coker{\id{X'}}}"', from=2-2, to=2-3]
	\arrow["{u'}", from=2-2, to=3-2]
	\arrow["{\exists !a'}", from=2-3, to=3-3]
	\arrow[equals, from=3-1, to=3-2]
	\arrow["{\coker{\id{X''}}}"', from=3-2, to=3-3]
\end{cd}
where $a$ and $a'$ are induced by the universal property of the involved cokernels. We need to show that this diagram exhibits a cokernel in $\ses{\C}$. But this is true by \prox \ref{propcharkerses} since $u=\ker{u'}$. This proves that the pretorsion theory is hereditary. The proof that it is also cohereditary is dual.
\end{proof}

Bihereditary pretorsion theories thus give a non-trivial interesting class of pretorsion theories.

We now construct the ground 2-category of our pseudo-comonad for bihereditary pretorsion theories. 

We define $\ClIdl$ to be the 2-category given by the following:
\begin{description}
    \item[an object of $\ClIdl$ is] a pair $(\C,\N)$ of a category $\C$ equipped with a closed ideal $\N$, that has all $\N$-kernels and all $\N$-cokernels;
    \item[a morphism from $(\C,\N)$ to $(\D,\M)$ is] a functor $F\:\C\to \D$ that preserves all $\N$-kernels and $\N$-cokernels;
    \item[a 2-cell is] just a natural transformation between functors.
\end{description}

\begin{proposition}\label{propclidl}
    $\ClIdl$ is indeed a 2-category.

    Moreover, every morphism in $\ClIdl$ automatically preserves null objects and short exact sequences.
\end{proposition}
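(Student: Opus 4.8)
The plan is to observe first that $\ClIdl$ is a locally full sub-2-category of the 2-category $\Cat$ (it contains all natural transformations between any two of its 1-cells), so that vertical and horizontal composition of 2-cells, identity 2-cells, the interchange law, and the coherence constraints are all inherited from $\Cat$. This leaves only one thing to verify for the 2-category assertion, namely that the prescribed class of 1-cells is closed under composition and contains the identities. This is a formality: the identity functor preserves all $\N$-(co)kernels, and if $F$ and $G$ are composable 1-cells then, for any $\N$-kernel $k=\ker{f}$ in the source of $F$, preservation by $F$ gives $F(k)=\ker{F(f)}$ and preservation by $G$ then gives $GF(k)=\ker{GF(f)}$; hence $GF$ preserves $\N$-kernels, and dually $\N$-cokernels. (These statements make sense because, by definition, every object of $\ClIdl$ has all $\N$-kernels and $\N$-cokernels.)

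For the second assertion I would handle null objects via the following observation. If $0$ is a null object of $\C$, then $\id{0}\in\N$, so by \lemx\ref{lemmino} the $\N$-kernel of $\id{0}$ is $\id{0}$ itself. A 1-cell $F$ preserving $\N$-kernels therefore carries the kernel diagram of $\id{0}$ to a kernel diagram of $F(\id{0})=\id{F(0)}$, so that $\id{F(0)}$ is an $\M$-kernel of $\id{F(0)}$. Since the composite of any morphism with an $\M$-kernel of it lies in $\M$, we conclude $\id{F(0)}=\id{F(0)}\c\id{F(0)}\in\M$, i.e.\ $F(0)$ is a null object of $\D$.

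Preservation of short exact sequences is then immediate: by definition $X\ar{f}Y\ar{g}Z$ is short exact in $\C$ exactly when $f=\ker{g}$ and $g=\coker{f}$, and a 1-cell $F$ preserving $\N$-kernels and $\N$-cokernels gives $F(f)=\ker{F(g)}$ and $F(g)=\coker{F(f)}$, so $F(X)\ar{F(f)}F(Y)\ar{F(g)}F(Z)$ is short exact in $\D$. A byproduct worth recording is that $F$ also preserves null morphisms: since $\N$ is closed, any null morphism factors through a null object, and $F$ carries such a factorization to one through a null object of $\D$.

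I do not expect a genuine obstacle. The argument is essentially an unwinding of the definition of $\ClIdl$ together with the formal stability of (co)kernel-preservation under functor composition. The only point deserving a moment's care is the null-object argument, whose whole content is to notice that it suffices to apply the preservation hypothesis to the kernel of an \emph{identity} morphism — which is exactly the situation described in \lemx\ref{lemmino}.
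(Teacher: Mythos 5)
Your proposal is correct. The paper itself gives no argument here (its proof reads only ``The proof is straightforward''), so there is no competing approach to compare against; your write-up is a correct and complete unwinding of what the authors left implicit. The one step with genuine content --- deducing preservation of null objects by applying kernel-preservation to $\id{0}$ and invoking \lemx\ref{lemmino}(ii), then using that the composite of a morphism with its $\M$-kernel lies in $\M$ --- is exactly the right observation, and your closing remark that preservation of null objects yields preservation of null morphisms (via closedness of the ideal) is a useful point that the statement of the proposition does not even record.
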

\begin{proof}
    The proof is straightforward.
\end{proof}

Now that we have defined the ground 2-category $\ClIdl$, we want to present the 2-category of pretorsion theories that will capture via our comonadic approach in this section.

In the literature, there is no standard 2-category of pretorsion theories. And it is not clear a priori what the morphisms between categories that are equipped with a pretorsion theory should be. One of the outcomes of this paper is to shed light on the choice of such morphisms. The following 2-category of bihereditary pretorsion theories is what will correspond to the pseudo-coalgebras of our pseudo-comonad.

We define $\BiHPTors$ to be the 2-category given by the following:
\begin{description}
    \item[an object of $\BiHPTors$ is] a bihereditary pretorsion theory $(\C,\T,\F)$ on a category $\C$ that has all kernels and cokernels with respect to the closed ideal $\T\cap \F$;
    \item[a morphism from $(\C,\T,\F)$ to $(\D,\T',\F')$ is] a functor $F\:\C\to \D$ that preserves kernels, cokernels, torsion objects and torsion-free objects:
    \item[a 2-cell is] just a natural transformation between functors.
\end{description}

\begin{proposition}
    $\BiHPTors$ is indeed a 2-category. Moreover, there is a forgetful 2-functor $U\: \BiHPTors\to \ClIdl$.
\end{proposition}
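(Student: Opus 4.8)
The plan is to exhibit $\BiHPTors$ as a sub-$2$-category of $\Cat$ and to define $U$ as the identity on underlying data, so that the bulk of the verification is formal; the only step needing a genuine argument is that a $1$-cell of $\BiHPTors$ automatically respects the associated ideals of null morphisms.

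First I would check that $\BiHPTors$ is a $2$-category. Since its $2$-cells are \emph{all} natural transformations between the underlying functors, vertical composition, horizontal composition and the interchange law are inherited verbatim from $\Cat$. It therefore suffices to observe that the identity functor on a category preserves kernels, cokernels, torsion objects and torsion-free objects, and that a composite of two $1$-cells again preserves all four of these, each being a ``preservation'' property manifestly stable under composition. Hence $\BiHPTors$ is a locally full sub-$2$-category of $\Cat$.

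Next I would build $U$. On objects it sends $(\C,\T,\F)$ to $(\C,\N)$, where $\N$ is the ideal of morphisms of $\C$ that factor through an object of $\mathcal{Z}=\T\cap\F$; by the standing assumptions on objects of $\BiHPTors$, $\N$ is a closed ideal and $\C$ has all $\N$-kernels and $\N$-cokernels (these being exactly the kernels and cokernels ``with respect to $\T\cap\F$''), so $(\C,\N)$ is an object of $\ClIdl$. On $1$-cells and $2$-cells, $U$ leaves the underlying functor, respectively natural transformation, unchanged. The step requiring an argument is that a $1$-cell $F\:(\C,\T,\F)\to(\D,\T',\F')$ of $\BiHPTors$ is a $1$-cell of $\ClIdl$: preservation of $\N$-kernels and $\N$-cokernels holds by the very definition of a $1$-cell of $\BiHPTors$, so the only content is that $F$ maps $\N$ into $\M$. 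This follows because $F$ preserves torsion and torsion-free objects, hence preserves objects of $\mathcal{Z}$, and therefore sends any morphism factoring through some $Z\in\mathcal{Z}$ to a morphism factoring through $F(Z)\in\T'\cap\F'$.

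Finally, strict $2$-functoriality of $U$ — preservation of all composites and identities of $1$-cells and $2$-cells — is immediate, since $U$ acts as the identity on underlying functors and natural transformations. I expect the observation that $\BiHPTors$-morphisms preserve null morphisms to be the only real obstacle; everything else is bookkeeping. One may additionally remark that, by \prox\ref{propclidl}, the morphisms in the image of $U$ moreover automatically preserve null objects and short exact sequences, in harmony with the structure already carried by $\BiHPTors$.
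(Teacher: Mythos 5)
Your proposal is correct and follows essentially the same route as the paper's (very terse) proof: inherit the $2$-category structure from $\Cat$, let $U$ forget the pretorsion data down to the closed ideal generated by $\T\cap\F$, and note that preservation of kernels and cokernels is built into the definition of a $1$-cell of $\BiHPTors$. Your explicit check that a $1$-cell maps the ideal $\N$ into $\M$ (via preservation of objects of $\mathcal{Z}$) is the one detail the paper leaves implicit, and you supply it correctly.
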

\begin{proof}
    It is straightforward to prove that $\BiHPTors$ is indeed a 2-category. Of course, every pretorsion theory $(\C,\T,\F)$ has the underlying category with a closed ideal $(\C,\T\cap \F)$. By construction, morphisms in $\BiHPTors$ preserve (relative) kernels and cokernels.
\end{proof}

We are now ready to present our pseudo-comonad for bihereditary pretorsion theories.

\begin{construction}\label{conscomonadbih}
    The assignment $\C\mto \Ses{\C}$ that sends every category $\C$ to the category of short exact sequences in $\C$ can be extended to a 2-functor
    \begin{fun}
	\Omega & \: & \ClIdl & \too & \ClIdl \\[0.3ex]
    && \tcv*{\C}{\D}{G}{H}{\alpha} & \mto & \tcv*{\Ses{\C}}{\Ses{\D}}{G}{H}{[\alpha]}
\end{fun}
Indeed, we proved in \ref{propsesN} that if $\C$ is a category equipped with a closed ideal that has all (relative) kernels and cokernels, also $\Ses{\C}$ is such. Given a functor $G\:\C\to \D$ that preserves kernels and cokernels, we define the functor $G\:\Ses{\C}\to \Ses{\D}$ to send a short exact sequence $X\ar{f}Y\ar{g} Z$ to the short exact sequence $G(X)\ar{G(f)}G(Y)\ar{G(g)} G(Z)$ (thanks to \prox\ref{propclidl}). Similarly, the action of $G\:\Ses{\C}\to \Ses{\D}$ on morphisms is given by applying $G\:\C\to \D$ to the whole diagram. It is straightforward to prove that we obtain a functor $G\:\Ses{\C}\to \Ses{\D}$. Moreover, this functor preserves kernels and cokernels, thanks to the explicit characterization of kernels and cokernels in $\Ses{\C}$ that we have proved in \prox\ref{propcharkerses}.

Given a 2-cell $\alpha\:G\aR{}H\:\C\to \D$ in $\ClIdl$, i.e.\ a natural transformation, we construct the natural transformation $[\alpha]\:G\aR{}H\:\Ses{\C}\to \Ses{\D}$ by setting its component on a short exact sequence $X\ar{f}Y\ar{g} Z$ to be the morphism
\begin{cd}[5]
    {G(X)} \& {G(Y)} \& {G(Z)} \\
	{H(X)} \& {H(Y)} \& {H(Z)}
	\arrow["{G(f)}", from=1-1, to=1-2]
	\arrow["{\alpha_X}"', from=1-1, to=2-1]
	\arrow["{G(g)}", from=1-2, to=1-3]
	\arrow["{\alpha_Y}", from=1-2, to=2-2]
	\arrow["{\alpha_Z}", from=1-3, to=2-3]
	\arrow["{H(f)}"', from=2-1, to=2-2]
	\arrow["{H(g)}"', from=2-2, to=2-3]
\end{cd}
The naturality of $\alpha$ guarantees that $[\alpha]$ is natural as well. 

Finally, it is straightforward to prove that $\Omega$ is indeed a 2-functor.
\end{construction}

\begin{theorem}\label{theorbih}
    The 2-functor
    \begin{fun}
	\Omega & \: & \ClIdl & \too & \ClIdl \\[0.3ex]
    && \tcv*{\C}{\D}{G}{H}{\alpha} & \mto & \tcv*{\Ses{\C}}{\Ses{\D}}{G}{H}{[\alpha]}
    \end{fun}
of \conx\ref{conscomonadbih} extends to a pseudo-comonad $\Omega$ on the 2-category $\ClIdl$ of categories equipped with a closed ideal that have all (relative) kernels and cokernels.

    Moreover, the 2-category of pseudo-coalgebras for $\Omega$ is 2-isomorphic to the 2-category $\BiHPTors$ of bihereditary pretorsion theories, over $\ClIdl$. In other words, bihereditary pretorsion theories are comonadic in a 2-dimensional sense over $\ClIdl$.
\end{theorem}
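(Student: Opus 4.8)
The plan is to realise the pseudo-comonad structure on $\Omega$ by means of the canonical short exact sequences constructed in the proof of Proposition~\ref{proppretorsses}, and then to identify a normal pseudo-coalgebra with exactly a bihereditary pretorsion theory together with its chosen short exact sequences. The counit $\epsilon_\C\:\Ses{\C}\to\C$ sends $X\to Y\to Z$ to its middle object $Y$; by Proposition~\ref{propcharkerses} this preserves $\N$-kernels and $\N$-cokernels, so it is a 1-cell of $\ClIdl$, and it is clearly strictly 2-natural. The comultiplication $\delta_\C\:\Ses{\C}\to\Ses{\Ses{\C}}$ sends $X\ar{f}Y\ar{g}Z$ to the short exact sequence in $\Ses{\C}$
\[
\bigl(X\aeqq{}X\ar{\coker{\id{X}}}C(\id{X})\bigr)\;\too\;\bigl(X\ar{f}Y\ar{g}Z\bigr)\;\too\;\bigl(K(\id{Z})\ar{\ker{\id{Z}}}Z\aeqq{}Z\bigr)
\]
appearing in the proof of Proposition~\ref{proppretorsses}. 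One checks that $\delta_\C$ is functorial, that it preserves $[\N]$-kernels and $[\N]$-cokernels (apply Proposition~\ref{propcharkerses} twice, inside $\Ses{\Ses{\C}}$), and that it is pseudo-natural --- only \emph{pseudo} because for a 1-cell $G$ of $\ClIdl$ the objects $G(C(\id{X}))$ and $C(\id{G(X)})$ agree only up to the canonical isomorphism witnessing that $G$ preserves cokernels. The two counit laws $\epsilon_{\Ses{\C}}\c\delta_\C=\Id{\Ses{\C}}$ and $\Omega(\epsilon_\C)\c\delta_\C=\Id{\Ses{\C}}$ hold on the nose, so both counit modifications of the pseudo-comonad are identities; the coassociativity modification $\delta_{\Ses{\C}}\c\delta_\C\Rightarrow\Omega(\delta_\C)\c\delta_\C$ is forced to be the canonical comparison, since by Remark~\ref{remcharnull} and Lemma~\ref{lemmino} the outer terms on either side are null short exact sequences with prescribed middle object, hence canonically (uniquely) isomorphic. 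The pseudo-comonad axioms then hold by the uniqueness clauses of the kernel/cokernel universal properties (and the triangles are trivial since the counit modifications are identities), so this verification is routine.

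For the comparison with $\BiHPTors$, unwind the data: a normal pseudo-coalgebra is a 1-cell $a\:\C\to\Ses{\C}$ of $\ClIdl$ with $\epsilon_\C\c a=\Id{\C}$ --- so $a(C)=(A_C\ar{f_C}C\ar{g_C}A^C)$ is a short exact sequence with middle object precisely $C$, natural in $C$ --- together with an invertible coassociativity 2-cell $\gamma\:\delta_\C\c a\Rightarrow\Omega(a)\c a$ subject to the coherence axioms. Given such an $a$, let $\T_a$ (resp.\ $\F_a$) be the full replete subcategory of $\C$ on the objects $C$ with $f_C$ (resp.\ $g_C$) an isomorphism, equivalently with $a(C)\in\T_{\Ses{\C}}$ (resp.\ $a(C)\in\F_{\Ses{\C}}$) for the pretorsion theory of Proposition~\ref{proppretorsses}. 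Then $\T_a\cap\F_a$ is the class of $C$ with $a(C)$ an all-null short exact sequence, which, since 1-cells of $\ClIdl$ preserve null objects (Proposition~\ref{propclidl}), is exactly the class of null objects of $\C$; hence it generates $\N$. Axiom (T1) follows from (T1) for $\Ses{\C}$: a morphism $C\to C'$ with $C\in\T_a$, $C'\in\F_a$ is sent by $a$ into $[\N]$, and applying $\epsilon_\C$ (using $\epsilon_\C\c a=\Id{\C}$) shows it factors through a null object. Axiom (T2) holds with the sequence $a(C)$ once $A_C\in\T_a$ and $A^C\in\F_a$; but the left term of $\delta_\C(a(C))$ is $(A_C\aeqq{}A_C\to\bullet)$ while that of $\Omega(a)(a(C))$ is $a(A_C)$, so the left component of $\gamma_C$ is an isomorphism $(A_C\aeqq{}A_C\to\bullet)\iso a(A_C)$ in $\Ses{\C}$, forcing $f_{A_C}$ to be an isomorphism; symmetrically $g_{A^C}$ is. Finally the theory is bihereditary: its torsion functor is $C\mapsto A_C$ (the first term of $a(-)$), and since $a$ preserves $[\N]$-kernels, Proposition~\ref{propcharkerses} identifies the first term of $\ker{a(h)}$ with the $\N$-kernel of the induced map on torsion parts, so this functor preserves kernels; dually for cohereditariness.

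Conversely, a bihereditary pretorsion theory $(\C,\T,\F)$ with its fixed choice of short exact sequences yields the 1-cell $a\:C\mapsto(T(C)\to C\to F(C))$ of $\ClIdl$ (functoriality of $T$ and $F$ is forced by (T1), and hereditariness and cohereditariness are, via Proposition~\ref{propcharkerses}, precisely the preservation of kernels and of cokernels by $a$), with $\epsilon_\C\c a=\Id{\C}$; moreover for $X\in\T$ axiom (T1) forces the kernel part of its chosen sequence to be an isomorphism and, by Lemma~\ref{lemmino}, the cokernel part to be null, so $a(T(C))\in\T_{\Ses{\C}}$, and dually $a(F(C))\in\F_{\Ses{\C}}$, which provides the required coassociativity 2-cell $\gamma$ --- unique, by Remark~\ref{remcharnull} and Lemma~\ref{lemmino}, and coherent for the same reason. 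The two passages are mutually inverse on the nose: $\T_a=\T$, $\F_a=\F$, the fixed choice of sequences is recovered as $a(-)$, and $a$ together with its unique $\gamma$ is recovered; so the two 2-categories have the same objects. On 1-cells, a pseudo-coalgebra morphism $(\C,a)\to(\D,b)$ is a 1-cell $F$ of $\ClIdl$ together with an invertible 2-cell $\sigma\:\Omega(F)\c a\Rightarrow b\c F$; its middle component is the identity by the counit coherence and normality, and its outer components are then uniquely determined, because $\Omega(F)(a(C))$ --- the sequence obtained by applying $F$ componentwise to the chosen sequence of $C$, whose outer terms lie in $\T'$ and $\F'$ exactly when $F$ preserves torsion and torsion-free objects --- and $b(FC)$ are then both (T2)-decompositions of $FC$ in $\D$, and such decompositions of a fixed object are unique up to a unique compatible isomorphism. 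Hence the 2-cell $\sigma$ exists, and uniquely so, exactly for 1-cells of $\BiHPTors$. Since 2-cells on both sides are just natural transformations between the underlying functors (the pseudo-coalgebra 2-cell condition being automatic from the uniqueness of these structure 2-cells) and both forgetful 2-functors to $\ClIdl$ send everything to its underlying data, the resulting 2-isomorphism is over $\ClIdl$.

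I expect the main obstacle to be the bookkeeping of the pseudo-structure: constructing the associativity modification of $\Omega$ and checking the pseudo-comonad axioms, and, on the coalgebra side, showing that the coassociativity 2-cell of a normal pseudo-coalgebra and the structure 2-cell of a pseudo-coalgebra morphism are each uniquely determined and available precisely under axiom (T2) and under preservation of torsion-free objects respectively. The point that keeps this from being genuinely hard is that every comparison isomorphism in sight --- between null short exact sequences with a fixed middle object, and between (T2)-decompositions of a fixed object --- is canonical; once Proposition~\ref{propcharkerses} and Lemma~\ref{lemmino} are available, all the coherence is automatic rather than something to be engineered.
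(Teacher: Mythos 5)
Your proposal is correct and follows essentially the same route as the paper: the same counit (middle object), the same comultiplication (the $3\times 3$ grid of Proposition \ref{proppretorsses}), the same canonical coassociativity modification, and the same identification of normal pseudo-coalgebras with bihereditary pretorsion theories via the counit constraint forcing the middle row and column and the coassociativity $2$-cell forcing $T^{T^X}\iso T^X$ and $F^{T^X}$ null, with uniqueness of the structure $2$-cells on $1$-cells obtained from the same mono/epi argument. The only cosmetic difference is that you derive (T1) by pushing through (T1) for $\Ses{\C}$ and applying $\epsilon_\C$, where the paper argues directly from $T^B$ being a null object; these are equivalent.
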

\begin{proof}
   We define the counit $\epsilon\:\Omega\aR{} \Id{}$ to have general component on $\C\in \ClIdl$ given by the functor
    \begin{fun}
	\epsilon_{\C} & \: & \Ses{\C} \hphantom{C}& \too & \C \\[0.3ex]
    && (X\ar{f} Y \ar{g} Z) & \mto & Y
    \end{fun}
    that takes the middle object of short exact sequences, and acts on morphisms in a similar way. The functor $\epsilon_{\C}$ preserves kernels and cokernels, and is thus a morphism in $\ClIdl$, thanks to the explicit characterization of kernels and cokernels in $\Ses{\C}$ that we proved in \prox\ref{propcharkerses}. Moreover, it is easy to show that $\epsilon$ is a 2-natural transformation.

    We then define the comultiplication $\delta\:\Omega\aR{}\Omega\c \Omega$ to have general component on $\C\in \ClIdl$ given by the functor $$\delta_{\C}\:\Ses{\C}\to \Ses{\Ses{\C}}$$
    that sends a short exact sequence $X\ar{f} Y \ar{g} Z$ in $\C$ to the chosen short exact sequence associated to it in the pretorsion theory on $\Ses{C}$ that we built in \ref{proppretorsses}:
    \begin{cd}[5][7]
        X \& X \& {C(\id{X})} \\
	X \& Y \& Z \\
	{K(\id{Z})} \& Z \& Z
	\arrow[equals, from=1-1, to=1-2]
	\arrow[equals, from=1-1, to=2-1]
	\arrow["{\coker{\id{X}}}", from=1-2, to=1-3]
	\arrow["f", from=1-2, to=2-2]
	\arrow["{\exists ! w}", dashed, from=1-3, to=2-3]
	\arrow["f", from=2-1, to=2-2]
	\arrow["{\exists ! u}"', dashed, from=2-1, to=3-1]
	\arrow["g", from=2-2, to=2-3]
	\arrow["g", from=2-2, to=3-2]
	\arrow[equals, from=2-3, to=3-3]
	\arrow["{\ker{\id{Z}}}"', from=3-1, to=3-2]
	\arrow[equals, from=3-2, to=3-3]
    \end{cd}
    It is then easy to extend $\delta_\C$ to a functor $\Ses{\C}\to \Ses{\Ses{\C}}$. Indeed, given a morphism $(u,v,w)$ in $\Ses{\C}$, we use the components of that morphism to define the needed maps on the central row and the central column of the $3\times 3$ grid. In positions $(1,1)$ and $(3,3)$, we have to use again $u$ and $w$. On the remaining two positions, we induce the maps using the universal properties of the cokernels and kernels that we have. These universal properties guarantee that $\delta_{\C}$ is a functor. Using the explicit chacterization of kernels and cokernels in the category of short exact sequences, that we proved in \prox\ref{propcharkerses}, applied to both $\Ses{\C}$ and $\Ses{\Ses{\C}}$, we obtain that $\delta_{\C}$ preserves kernels and cokernels. So $\delta_{\C}$ is a morphism in $\ClIdl$.
    
    Moreover, the components $\delta_{\C}$ can be organized into a pseudo-natural transformation $\delta\:\Omega\aR{}\Omega\c\Omega$ as follows. Given a morphism $G\:\C\to \D$ in $\ClIdl$, we define the structure 2-cell
    \begin{cd}[5]
        {\Ses{\C}} \& {\Ses{\Ses{\C}}} \\
	{\Ses{\D}} \& {\Ses{\Ses{\D}}}
	\arrow["{\delta_{\C}}", from=1-1, to=1-2]
	\arrow[""{name=0, anchor=center, inner sep=0}, "G"', from=1-1, to=2-1]
	\arrow[""{name=1, anchor=center, inner sep=0}, "G", from=1-2, to=2-2]
	\arrow["{\delta_{\D}}"', from=2-1, to=2-2]
	\arrow["{\delta_{G}}"', iso, from=0, to=1]
    \end{cd}
    to be the natural transformation that has component on a short exact sequence $X\ar{f} Y \ar{g} Z$ in $\C$ given by
    \begin{cd}[4]
        \&\&\&[-6ex] {G(X)} \& {G(X)} \& {C(\id{G(X)})} \\
	{G(X)} \& {G(X)} \& {G(C(\id{X}))} \& {G(X)} \& {G(Y)} \& {G(Z)} \\
	{G(X)} \& {G(Y)} \& {G(Z)} \& {K(\id{G(Z)})} \& {G(Z)} \& {G(Z)} \\
	{G(K(\id{Z}))} \& {G(Z)} \& {G(Z)}
	\arrow[equals, from=1-4, to=1-5]
	\arrow[equals, from=1-4, to=2-4]
	\arrow["{\coker{\id{G(X)}}}", from=1-5, to=1-6]
	\arrow["{G(f)}", from=1-5, to=2-5]
	\arrow["{G(w)}", from=1-6, to=2-6]
	\arrow[equals, from=2-1, to=2-2]
	\arrow[equals, from=2-1, to=3-1]
	\arrow["{G(\coker{\id{X}})}", from=2-2, to=2-3]
	\arrow["{G(f)}", from=2-2, to=3-2]
	\arrow[aiso,from=2-3, to=1-6]
	\arrow["{G(w)}", from=2-3, to=3-3]
	\arrow["{G(f)}", from=2-4, to=2-5]
	\arrow["{G(u)}"', from=2-4, to=3-4]
	\arrow["{G(g)}", from=2-5, to=2-6]
	\arrow["{G(g)}", from=2-5, to=3-5]
	\arrow[equals, from=2-6, to=3-6]
	\arrow["{G(f)}", from=3-1, to=3-2]
	\arrow["{G(u)}"', from=3-1, to=4-1]
	\arrow["{G(g)}", from=3-2, to=3-3]
	\arrow["{G(g)}", from=3-2, to=4-2]
	\arrow[equals, from=3-3, to=4-3]
	\arrow["{\ker{\id{G(Z)}}}"', from=3-4, to=3-5]
	\arrow[equals, from=3-5, to=3-6]
	\arrow[aiso,from=4-1, to=3-4]
	\arrow["{G(\ker{\id{Z}})}"', from=4-1, to=4-2]
	\arrow[equals, from=4-2, to=4-3]
    \end{cd}
    where the morphisms we have not drawn are all identities and the isomorphisms we have drawn are given by the fact that $G$ preserves kernels and cokernels. It is easy to see that this is indeed an isomorphism in $\Ses{\Ses{\D}}$. Notice that the two isomorphisms drawn in the picture are the unique morphisms that can be placed in that position, since $\ker{\id{G(Z)}}$ is mono and $G(\coker{\id{X}})$ is epi. This guarantees that $\delta$ is a pseudo-natural transformation.

    We prove that $(\Omega,\delta,\epsilon)$ extend to a pseudo-comonad on $\ClIdl$. Notice first that the following two triangles of pseudo-natural transformations commute:
    \begin{eqD*}
    \begin{cd}*
        \Omega \& {\Omega\c \Omega} \\
	\& \Omega
	\arrow["\delta", Rightarrow, from=1-1, to=1-2]
	\arrow[bend right=20, equals, from=1-1, to=2-2]
	\arrow["{\epsilon \Omega}", Rightarrow, from=1-2, to=2-2]
    \end{cd} \qquad
    \begin{cd}*
        \Omega \& {\Omega\c \Omega} \\
	\& \Omega
	\arrow["\delta", Rightarrow, from=1-1, to=1-2]
	\arrow[bend right=20, equals, from=1-1, to=2-2]
	\arrow["{\Omega \epsilon}", Rightarrow, from=1-2, to=2-2]
    \end{cd}
    \end{eqD*}
    This is because the action of $\delta_{\C}$ places the input (without modifying it) on both the middle row and the middle column of the $3\times 3$ grid. And also the structure 2-cells $\delta_G$ just have identities over the middle row and the middle column. We then construct an invertible modification
    \begin{cd}[6.5]
    \Omega \& {\Omega\c \Omega} \\
	{\Omega\c \Omega} \& {\Omega\c \Omega\c \Omega}
	\arrow["\delta", Rightarrow, from=1-1, to=1-2]
	\arrow[""{name=0, anchor=center, inner sep=0}, "\delta"', Rightarrow, from=1-1, to=2-1]
	\arrow[""{name=1, anchor=center, inner sep=0}, "{\Omega \delta}", Rightarrow, from=1-2, to=2-2]
	\arrow["{\delta \Omega}"', Rightarrow, from=2-1, to=2-2]
	\arrow["\Xi"'{inner sep=1.1ex}, iso, from=0, to=1]
    \end{cd}
    We define its component on $\C\in \ClIdl$ to be the natural isomorphism 
    \begin{cd}
    {\Ses{\C}} \& {\Ses{\Ses{\C}}} \\
	{\Ses{\Ses{\C}}} \& {\Ses{\Ses{\Ses{\C}}}}
	\arrow["{\delta_{\C}}", Rightarrow, from=1-1, to=1-2]
	\arrow[""{name=0, anchor=center, inner sep=0}, "{\delta_{\C}}"', Rightarrow, from=1-1, to=2-1]
	\arrow[""{name=1, anchor=center, inner sep=0}, "{\delta_{\C}}", Rightarrow, from=1-2, to=2-2]
	\arrow["{\delta_{\Ses{\C}}}"', Rightarrow, from=2-1, to=2-2]
	\arrow["{\Xi_{\C}}"'{inner sep=1.1ex}, iso, from=0, to=1]
    \end{cd}
    that has component on a short exact sequence $X\ar{f} Y \ar{g} Z$ given by identities in the twenty-three appropriate positions and four more interesting morphisms in positions $(3,1,1)$, $(3,3,1)$, $(1,1,3)$, $(1,3,3)$, of which the former two are dual to the latter two. In position $(3,3,1)$ we put the canonical isomorphism
    $$C(\id{X})\iso C(\id{C(\id{X})})$$
    that is given by the fact that $C(\id{X})$ is a null object and $\lemx\ref{lemmino}$. While in position $(3,1,1)$ we put the canonical isomorphism
    $$K(\id{C(\id{X})})\iso K(\coker{\id{C(\id{X})}})$$
    that is induced by the isomorphism above. It is straightforward to prove that $\Xi$ is a well-defined modification.

    It is now straightforward to prove that $(\Omega,\delta,\epsilon,\Xi)$ is a pseudo-comonad.
  
    We now prove that the 2-category of pseudo-coalgebras for $\Omega$ is 2-isomorphic to the 2-category $\BiHPTors$, over $\ClIdl$.

    A pseudo-coalgebra for $\Omega$ is $(\C,\N)\in \ClIdl$ equipped with a coalgebra morphism $\lambda\:\C\to \Ses{\C}$ in $\ClIdl$ and a natural isomorphism 
    \begin{cd}
        \C \& {\Ses{\C}} \\
	{\Ses{\C}} \& {\Ses{\Ses{\C}}}
	\arrow["\lambda", from=1-1, to=1-2]
	\arrow[""{name=0, anchor=center, inner sep=0}, "\lambda"', from=1-1, to=2-1]
	\arrow[""{name=1, anchor=center, inner sep=0}, "\lambda", from=1-2, to=2-2]
	\arrow["{\delta_{\C}}"', from=2-1, to=2-2]
	\arrow["{\lambda_{\delta}}"'{inner sep = 1ex}, iso, from=0, to=1]
    \end{cd}
    such that the triangle
    \begin{cd}[5]
        \C \& {\Ses{\C}} \\
	\& \C
	\arrow["\lambda", from=1-1, to=1-2]
	\arrow[bend right=20, equals, from=1-1, to=2-2]
	\arrow["{\epsilon_{\C}}", from=1-2, to=2-2]
    \end{cd}
    commutes and the axioms of pseudo-coalgebra are satisfied. Thanks to the triangle above, the coalgebra map $\lambda$ thus assigns to every $X\in \C$ a short exact sequence
    $$T^X\ar{\ell^x} X \ar{r^X}F^X$$
    with $X$ in the middle, and to every morphism $h\:X\to Y$ in $\C$ a morphism in $\Ses{\C}$
    \begin{cd}[5]
        {T^X} \& X \& {F^X} \\
	{T^Y} \& Y \& {F^Y}
	\arrow["{\ell^X}", from=1-1, to=1-2]
	\arrow["{h^T}"', from=1-1, to=2-1]
	\arrow["{r^X}", from=1-2, to=1-3]
	\arrow["h", from=1-2, to=2-2]
	\arrow["{h^F}", from=1-3, to=2-3]
	\arrow["{\ell^Y}"', from=2-1, to=2-2]
	\arrow["{r^Y}"', from=2-2, to=2-3]
    \end{cd}
    with $h$ in the middle.

    Moreover, for every $X\in \C$, $\lambda_{\delta}$ provides an isomorphism in $\Ses{\Ses{\C}}$ given by
    \begin{cd}[4][6.8]
    \&\&\&[-3ex] {T^X} \& {T^X} \& {C(\id{T^X})} \\
	{T^{T^X}} \& {T^X} \& {F^{T^X}} \& {T^X} \& X \& {F^X} \\
	{T^X} \& X \& {F^X} \& {K(\id{F^X})} \& {F^X} \& {F^{F^X}} \\
	{T^{F^X}} \& {F^X} \& {F^{F^X}}
	\arrow[equals, from=1-4, to=1-5]
	\arrow[equals, from=1-4, to=2-4]
	\arrow["{\coker{\id{T^X}}}", from=1-5, to=1-6]
	\arrow["{\ell^X}", from=1-5, to=2-5]
	\arrow["w", from=1-6, to=2-6]
	\arrow["{\lambda_{\delta}^{1,1}}"{description}, from=2-1, to=1-4]
	\arrow["{\ell^{T^X}}", from=2-1, to=2-2]
	\arrow["{(\ell^X)^T}"', from=2-1, to=3-1]
	\arrow["{r^{T^X}}", from=2-2, to=2-3]
	\arrow["{\ell^X}", from=2-2, to=3-2]
	\arrow["{\lambda_{\delta}^{1,3}}"{description}, from=2-3, to=1-6]
	\arrow["{(\ell^X)^F}", from=2-3, to=3-3]
	\arrow["{\ell^X}", from=2-4, to=2-5]
	\arrow["u"', from=2-4, to=3-4]
	\arrow["{r^X}", from=2-5, to=2-6]
	\arrow["{r^X}"', from=2-5, to=3-5]
	\arrow[equals, from=2-6, to=3-6]
	\arrow["{\ell^X}", from=3-1, to=3-2]
	\arrow["{(r^X)^T}"', from=3-1, to=4-1]
	\arrow["{r^X}", from=3-2, to=3-3]
	\arrow["{r^X}"', from=3-2, to=4-2]
	\arrow["{(r^X)^T}", from=3-3, to=4-3]
	\arrow["{\ker{\id{F^X}}}"', from=3-4, to=3-5]
	\arrow[equals, from=3-5, to=3-6]
	\arrow["{\lambda_{\delta}^{3,1}}"{description}, from=4-1, to=3-4]
	\arrow["{\ell^{F^X}}"', from=4-1, to=4-2]
	\arrow["{r^{F^X}}"', from=4-2, to=4-3]
	\arrow["{\lambda_{\delta}^{3,3}}"{description}, from=4-3, to=3-6]
    \end{cd}
    The axiom of pseudo-coalgebra that links $\lambda_{\delta}$ with the counit (together with its dual axiom that is automatically satisfied) precisely translates into the request that all the morphisms we have not drawn above are identities. It is thus implied that
    $$T^{T^X}\aiso{\lambda_{\delta}^{1,1}} T^X \quad \text{ and } \quad F^{T^X} \text{ null object}$$
    $$F^{F^X}\aiso{\lambda_{\delta}^{3,3}} F^X \quad \text{ and } \quad T^{F^X} \text{ null object}$$

    The naturality of $\lambda_{\delta}$ translates into having, for every morphism $h\:X\to Y$ in $\C$ the two axioms
    \begin{eqD*}
        \begin{cd}*
        {T^{T^X}} \& {T^{T^Y}} \\
	{T^{X}} \& {T^{Y}}
	\arrow["{(h^T)^T}", from=1-1, to=1-2]
	\arrow[from=1-1, to=2-1, aiso]
	\arrow[from=1-2, to=2-2, aiso]
	\arrow["{h^T}"', from=2-1, to=2-2]
        \end{cd}
        \qquad
        \begin{cd}*
            {F^{T^X}} \& {F^{T^Y}} \\
	{\coker{\id{T^X}}} \& {\coker{\id{T^Y}}}
	\arrow["{(h^T)^F}", from=1-1, to=1-2]
	\arrow[from=1-1, to=2-1, aiso]
	\arrow[from=1-2, to=2-2, aiso]
	\arrow["{\exists !}"', from=2-1, to=2-2]
        \end{cd}
    \end{eqD*}
    plus their dual (on the middle row and middle column of the $3\times 3$ grid we have only trivial axioms).

    The remaining axiom of pseudo-coalgebra that $\lambda_{\delta}$ needs to satisfy translates into twenty-seven diagrams, of which only the following eight and their duals are interesting:
    \begin{eqD*}
        \begin{cd}*
            {T^{T^{T^X}}} \& {T^{{T^X}}} \& {{{T^X}}} \\
	{T^{{T^X}}} \& {T^{{T^X}}} \& {{{T^X}}}
	\arrow["{(\lambda^{1,1}_{\delta})_{T^X}}", from=1-1, to=1-2]
	\arrow["{(\lambda^{1,1}_{\delta})_{X}^{T}}"', from=1-1, to=2-1]
	\arrow["{(\lambda^{1,1}_{\delta})_{X}}", from=1-2, to=1-3]
	\arrow[from=1-3, to=2-3, equal]
	\arrow[from=2-1, to=2-2, equal]
	\arrow["{(\lambda^{1,1}_{\delta})_{T^X}}"', from=2-2, to=2-3]
        \end{cd}
    \qquad
    \begin{cd}*
        {{T^{T^X}}} \& {T^{{T^X}}} \& {{{T^X}}} \\
	{{{T^X}}} \& {{{T^X}}} \& {{{T^X}}}
	\arrow[equals, from=1-1, to=1-2]
	\arrow["{(\lambda^{1,1}_{\delta})_{X}}"', from=1-1, to=2-1]
	\arrow["{(\lambda^{1,1}_{\delta})_{X}}", from=1-2, to=1-3]
	\arrow[equals, from=1-3, to=2-3]
	\arrow[equals, from=2-1, to=2-2]
	\arrow["{(\lambda^{1,1}_{\delta})_{T^X}}"', from=2-2, to=2-3]
    \end{cd}
    \end{eqD*}
    \begin{eqD*}
        \begin{cd}*
            {F^{T^{T^X}}} \& {\coker{\id{T^{T^X}}}} \& {\coker{\id{{T^X}}}} \\
	{F^{{T^X}}} \& {F^{{T^X}}} \& {\coker{\id{{T^X}}}}
	\arrow["{(\lambda^{1,3}_{\delta})_{T^X}}", from=1-1, to=1-2]
	\arrow["{(\lambda^{1,3}_{\delta})_{X}^F}"', from=1-1, to=2-1]
	\arrow["{\exists !}", from=1-2, to=1-3]
	\arrow[equals, from=1-3, to=2-3]
	\arrow[equals, from=2-1, to=2-2]
	\arrow["{(\lambda^{1,3}_{\delta})_{X}}"', from=2-2, to=2-3]
        \end{cd}
        \begin{cd}*
           {{T^{T^X}}} \& {{T^{T^X}}} \& {{{T^X}}} \\
	{T^{{T^X}}} \& {T^{{T^X}}} \& {{{T^X}}}
	\arrow[equals, from=1-1, to=1-2]
	\arrow["{(\lambda^{1,1}_{\delta})_{X}^T}"', from=1-1, to=2-1]
	\arrow["{(\lambda^{1,1}_{\delta})_{T^X}}", from=1-2, to=1-3]
	\arrow[equals, from=1-3, to=2-3]
	\arrow[equals, from=2-1, to=2-2]
	\arrow["{(\lambda^{1,1}_{\delta})_{X}}"', from=2-2, to=2-3]
        \end{cd}
    \end{eqD*}
    \begin{eqD*}
        \begin{cd}*
           {{F^{T^X}}} \& {{F^{T^X}}} \& {\coker{\id{T^X}}} \\
	{{F^{T^X}}} \& {{F^{T^X}}} \& {\coker{\id{T^X}}}
	\arrow[equals, from=1-1, to=1-2]
	\arrow[equals, from=1-1, to=2-1]
	\arrow["{(\lambda^{1,3}_{\delta})_{X}}", from=1-2, to=1-3]
	\arrow[equals, from=1-3, to=2-3]
	\arrow[equals, from=2-1, to=2-2]
	\arrow["{(\lambda^{1,3}_{\delta})_{X}}"', from=2-2, to=2-3]
        \end{cd}
        \begin{cd}*
           {T^{F^{T^X}}} \& {\ker{\id{F^{T^{X}}}}} \& {\ker{\id{\coker{\id{T^X}}}}} \\
	{T^{\coker{\id{T^X}}}} \& {\ker{\nu}} \& {\coker{\id{\coker{\id{X}}}}}
	\arrow["{(\lambda^{3,1}_{\delta})_{X}}", from=1-1, to=1-2]
	\arrow["{(\lambda^{1,3}_{\delta})^T_{X}}"', from=1-1, to=2-1]
	\arrow["{\exists !}", from=1-2, to=1-3]
	\arrow["\Xi", from=1-3, to=2-3]
	\arrow["{\delta_{\lambda}}"', from=2-1, to=2-2]
	\arrow["{\exists !}"', from=2-2, to=2-3]
        \end{cd}
    \end{eqD*}
    \begin{eqD*}
        \begin{cd}*
           {{F^{T^X}}} \& {{F^{T^X}}} \& {{\coker{\id{T^X}}}} \\
	{{\coker{\id{T^X}}}} \& {{\coker{\id{T^X}}}} \& {{\coker{\id{T^X}}}}
	\arrow[equals, from=1-1, to=1-2]
	\arrow["{(\lambda^{1,3}_{\delta})_{X}}"', from=1-1, to=2-1]
	\arrow["{(\lambda^{1,3}_{\delta})_{X}}", from=1-2, to=1-3]
	\arrow[equals, from=1-3, to=2-3]
	\arrow[equals, from=2-1, to=2-2]
	\arrow[equals, from=2-2, to=2-3]
        \end{cd}
        \end{eqD*}
        \begin{eqD*}
        \begin{cd}*
           {F^{F^{T^X}}} \& {{F^{T^X}}} \& {{\coker{\id{T^X}}}} \\
	{F^{\coker{\id{T^X}}}} \& {{\coker{\id{F^{T^X}}}}} \& {{\coker{\id{\coker{\id{T^X}}}}}}
	\arrow["{(\lambda^{3,3}_{\delta})_{T^X}}", from=1-1, to=1-2]
	\arrow["{(\lambda^{1,3}_{\delta})^F_{X}}"', from=1-1, to=2-1]
	\arrow["{(\lambda^{1,3}_{\delta})_{X}}", from=1-2, to=1-3]
	\arrow["\Xi", from=1-3, to=2-3]
	\arrow["{{\delta_{\lambda}}}"', from=2-1, to=2-2]
	\arrow["{\exists !}"', from=2-2, to=2-3]
        \end{cd}
    \end{eqD*}
Here $\nu$ is the unique morphism induced by the universal property of $\coker{\id{F^{T^X}}}$ applied to $\coker{\id{F^{T^X}}} \c r^{T^X}$. 
    We prove that every pseudo-coalgebra $(\C,\N,\lambda,\lambda_{\delta})$ for $\Omega$ yields a bihereditary pretorsion theory. We define the classes of torsion and torsion-free objects to be
    $$\T= \lambda^{-1}(\T_{\Ses{\C}}) \quad \text{ and }\quad \F= \lambda^{-1}(\F_{\Ses{\C}})$$
    where $\T_{\Ses{\C}}$ and $\F_{\Ses{\C}}$ are the torsion and torsion-free classes of the pretorsion theory on $\Ses{\C}$ constructed in \prox\ref{proppretorsses}. Since $\T_{\Ses{\C}}$ and $\F_{\Ses{\C}}$ are full replete subcategories of $\Ses{\C}$, we obtain that $\T$ and $\F$ are full replete subcategories of $\C$. It is easy to show that $\T\cap \F= \N$.
    
    \noindent Moreover, every object $X\in \C$ has an associated short exact sequence $T^X\ar{\ell^X} X\ar{r^X} F^X$ with $X$ in the middle, given by $\lambda(X)$. We have that $T^X\in \T$, because
    $$\lambda(T^X)=(T^{T^X}\ar{\ell^{T^X}} T^X\ar{r^{T^X}} F^{T^X})$$
    is of the form $\bullet \iso \bullet \to \bullet$ thanks to the natural isomorphism $\lambda_{\delta}$. Dually, $F^X\in \F$.

    \noindent It remains to prove that every morphism $h$ in $\C$ from $A\in \T$ to $B\in \F$ is null. $\lambda$ sends $h$ to
    \begin{cd}
        {T^A} \& A \& {F^A} \\
	   {T^B} \& B \& {F^B}
	\arrow[iso, from=1-1, to=1-2]
	\arrow["{h^T}"', from=1-1, to=2-1]
	\arrow["{r^A}", from=1-2, to=1-3]
	\arrow["h", from=1-2, to=2-2]
	\arrow["{h^F}", from=1-3, to=2-3]
	\arrow["{\ell^B}"', from=2-1, to=2-2]
	\arrow[iso, from=2-2, to=2-3]
    \end{cd}
    Since $T^B$ is a null object, by the axioms of pseudo-coalgebra, we deduce that $h$ is a null morphism. We conclude that $(\C,\T,\F)$ is a pretorsion theory.

    \noindent We prove that it is a hereditary pretorsion theory. So consider a kernel $K(g)\ar{\ker{g}} X\ar{g} Y$ in $\C$. We need to show that the induced torsion part 
    $$T^{K(g)} \ar{\ker{g}^T} T^X \ar{g^T} T^Y$$
    is again a kernel in $\C$. Since $\lambda$ is a morphism in $\ClIdl$, it preserves kernels. So $(\ker{g}^T, \ker{g}, \ker{g}^F)$ is the kernel of $(g^T,g,g^F)$ in $\Ses{\C}$. Using \prox\ref{propcharkerses}, we obtain that $\ker{g}^T$ is the kernel of $g^T$ in $\C$, and we conclude. Dually, $(\C,\T,\F)$ is also cohereditary.

    We now prove that every bihereditary pretorsion theory $(\C,\T,\F)$ yields a pseudo-coalgebra for $\Omega$. Of course, $(\C,\T\cap \F)\in \ClIdl$. We then define
    \begin{fun}
	\lambda & \: & \C \hphantom{C} & \too & \hphantom{CC}\Ses{\C} \\[0.3ex]
    && \fib{X}{h}{Y} & \mto & \begin{cd}*[4][5.5]
{T^X} \& X \& {F^X} \\
	{T^Y} \& Y \& {F^Y}
	\arrow["{\ell^X}", from=1-1, to=1-2]
	\arrow["{h^T}"', from=1-1, to=2-1]
	\arrow["{r^X}", from=1-2, to=1-3]
	\arrow["h", from=1-2, to=2-2]
	\arrow["{h^F}", from=1-3, to=2-3]
	\arrow["{\ell^Y}"', from=2-1, to=2-2]
	\arrow["{r^Y}"', from=2-2, to=2-3]
    \end{cd}
    \end{fun}
    to send every $X$ to its associated short exact sequence given by the pretorsion theory and every $h\:X\to Y$ to the triple $(h^T,h,h^F)$ where $h^T$ is the torsion part of $h$ and $h^F$ is the torsion-free part of $h$ (induced by the universal properties of the relevant kernels and cokernels). Of course, $\lambda$ is a functor, by the uniqueness part of the universal properties. Moreover, $\lambda$ preserves kernels thanks to \prox\ref{propcharkerses} and the fact that $(\C,\T,\F)$ is hereditary. Dually, $\lambda$ preserves cokernels as well. So that $\lambda$ is a morphism in $\ClIdl$. By construction, $\epsilon_{\C}\c \lambda=\Id{}$.

    \noindent We define $\lambda_{\delta}$ to have component on $X\in \C$ given by identities in all the needed positions and the following two morphisms plus their duals. In position $(1,1)$, we put the canonical isomorphism $T^{T^X}\iso T^X$ that is given by the fact that $T^X\in \T$. And in position $(1,3)$ we put the canonical isomorphism $F^{T^X}\iso C(\id{T^X})$ obtained by the fact that $T^X \aeqq{} T^X \to C(\id{T^X})$ is another short exact sequence with $T^X$ in the middle other than the chosen one that has a torsion object on the left and a torsion-free object (in fact, a null object) on the right. It is straightforward to see that this gives a natural isomorphism $\lambda_{\delta}$. Notice that $\ell^{T^X}=(\ell^X)^T$, since $\ell^X$ is mono. And then also $\ell^{T^{T^X}}=(\ell^{T^X})^T$. It is now straightforward to prove that $(\C,\T\cap \F,\lambda,\lambda_{\delta})$ is a pseudo-comonad for $\Omega$. 

A pseudo-morphism of normal pseudo-$\Omega$-coalgebras from $(\C,\N, \lambda\: \C \to \ses{\C}, \lambda_\delta)$ to $(\D,\N', \mu\: \D \to \ses{\D}, \mu_\delta)$ is a pair $(G, \overline{G})$, where $G\: \C \to \D$ is a functor that preserves kernels and cokernels and $\overline{G}$ is an isomorphic  natural transformation 
\begin{cd}
    \C \& \D \\
	{\ses{\C}} \& {\ses{\D}.}
	\arrow["G", from=1-1, to=1-2]
	\arrow[""{name=0, anchor=center, inner sep=0}, "\lambda"', from=1-1, to=2-1]
	\arrow[""{name=1, anchor=center, inner sep=0}, "\mu", from=1-2, to=2-2]
	\arrow["{G}"', from=2-1, to=2-2]
	\arrow["{\overline{G}}"'{inner sep=1ex}, iso, from=0, to=1]
\end{cd}
So the component of $\overline{G}$ on $X\in \C$ is an isomorphism in $\ses{\D}$
\begin{cd}
    {T^{G(X)}} \& {G(X)} \& {F^{G(X)}} \\
	{G(T^X)} \& {G(X)} \& {G(F^X)}
	\arrow["{\ell^{G(X)}}", from=1-1, to=1-2]
	\arrow["{{\overline{G}}^1_X}"',aiso, from=1-1, to=2-1]
	\arrow["{r^{G(X)}}", from=1-2, to=1-3]
	\arrow["{{\overline{G}}^2_X}"',aiso,  from=1-2, to=2-2]
	\arrow["{{\overline{G}}^3_X}"',aiso,  from=1-3, to=2-3]
	\arrow["{G(\ell^X)}"', from=2-1, to=2-2]
	\arrow["{G(r^X)}"', from=2-2, to=2-3]
\end{cd}
The axiom on the compatibility with comultiplication written on components is an equality between morphisms in $\ses{\ses{\D}}$ and thus it consists of 9 commutative diagrams in $\D$. The diagram corresponding to the object in the middle in the short exact sequence, in position 2,2 in matrix notation, is trivial. And out of the other eight, four are dual to the other four. For instance, the one corresponding to position 3,2 is dual to the one of position 1,2. So we only explicitly write the ones corresponding to the positions 1,1 1,2 2,1 and 1,3. They are the following commutative diagrams
\begin{eqD*}
    \begin{cd}*
 {T^{G(X)}} \& {T^{T^{G(X)}}} \& {T^{G(T^X)}} \\
	{G(T^X)} \& {G(T^X)} \& {G(T^{T^{X}})}
	\arrow["{(\mu_{\delta}^{1,1})^{-1}_{G(X)}}", from=1-1, to=1-2]
	\arrow["{\overline{G}^1_X}"', from=1-1, to=2-1]
	\arrow["{(\overline{G}^1_X)^T}", from=1-2, to=1-3]
	\arrow["{\overline{G}^1_{T^X}}", from=1-3, to=2-3]
	\arrow[from=2-1, to=2-2, equal]
	\arrow["{G((\lambda_{\delta}^{1,1})^{-1}_X)}"', from=2-2, to=2-3]
\end{cd}
\qquad
\begin{cd}*
    {T^{G(X)}} \& {T^{{G(X)}}} \& {G(T^X)} \\
	{G(T^X)} \& {G(T^X)} \& {G({T^{X}})}
	\arrow["{(\mu_{\delta}^{1,2})^{-1}_{G(X)}}", from=1-1, to=1-2]
	\arrow["{\overline{G}^1_X}"', from=1-1, to=2-1]
	\arrow["{\overline{G}^1_X}", from=1-2, to=1-3]
	\arrow["{\overline{G}^2_{T^X}}", from=1-3, to=2-3]
	\arrow[from=2-1, to=2-2, equal]
	\arrow["{G((\lambda_{\delta}^{1,2})^{-1}_X)}"', from=2-2, to=2-3]
\end{cd}
\end{eqD*}
\begin{eqD*}
    \begin{cd}*
    {T^{G(X)}} \& {T^{G(X)}} \& {T^{G(X)}} \\
	{G(T^X)} \& {G(T^X)} \& {G({T^{X}})}
	\arrow["{(\mu_{\delta}^{2,1})^{-1}_{G(X)}}", from=1-1, to=1-2]
	\arrow["{\exists !}"', from=1-1, to=2-1]
	\arrow["{(\overline{G}^2_X)^T}", from=1-2, to=1-3]
	\arrow["{\overline{G}^1_{X}}", from=1-3, to=2-3]
	\arrow[from=2-1, to=2-2,equal]
	\arrow["{G((\lambda_{\delta}^{2,1})^{-1}_X)}"', from=2-2, to=2-3]
    \end{cd}
    \quad
     \begin{cd}*
     {\coker{\id{T^{G(X)}}}} \& {F^{T^{G(X)}}} \& {F^{G(T^{X})}} \\
	{\coker{\id{G(T^X)}}} \& {G(\coker{\id{T^X}})} \& {G(F^{T^{X}})}
	\arrow["{(\mu_{\delta}^{1,3})^{-1}_{G(X)}}", from=1-1, to=1-2]
	\arrow["{\exists !}"', from=1-1, to=2-1]
	\arrow["{(\overline{G}^1_X)^F}", from=1-2, to=1-3]
	\arrow["{\overline{G}^3_{T^X}}", from=1-3, to=2-3]
	\arrow["{\delta^{1,3}_G}"', from=2-1, to=2-2]
	\arrow["{G((\lambda_{\delta}^{1,3})^{-1}_X)}"', from=2-2, to=2-3]
    \end{cd}
\end{eqD*}
The other axiom of pseudo-morphism of normal pseudo-$\Omega$-coalgebras on component $X$ gives the condition $\overline{G}^2_X= \id{G(X)}$.
We now show that the functor $G:\C \to \D$ yields a functor between pretorsion theories $G\: (\C, \T, \F) \to (\D, \T', \F')$. We already know that $G$ preserves kernels and cokernels, so it remains to prove that it preserves torsion and torsion-free class. 
Let $A\in \T$ and consider the morphism of pretorsion theories $\overline{G}_A$:
\begin{cd}
    {T^{G(A)}} \& {G(A)} \& {F^{G(A)}} \\
	{G(T^A)} \& {G(A)} \& {G(F^A)}
	\arrow["{\ell^{G(A)}}", from=1-1, to=1-2]
	\arrow["{{\overline{G}}^1_A}"',aiso, from=1-1, to=2-1]
	\arrow["{r^{G(A)}}", from=1-2, to=1-3]
	\arrow[equal,  from=1-2, to=2-2]
	\arrow["{{\overline{G}}^3_A}"',aiso,  from=1-3, to=2-3]
	\arrow["{G(\ell^A)}"', from=2-1, to=2-2, aiso]
	\arrow["{G(r^A)}"', from=2-2, to=2-3]
\end{cd}
This gives an isomorphism between $G(A)$ and $T^{G(A)}\in \T'$. Since $\T'$ is a replete subcategory of $\D$, we conclude that $G(A)\in \T'$. Analogously, given $B\in \F$, we have $G(B)\in \F'$. So $G$ is a functor between pretorsion theories.

We now show that every functor between pretorsion theories $G\: (\C, \T, \F) \to (\D, \T', \F')$ (with $\C$ and $\D$ admitting all kernels and cokernels) has an associated pseudo-morphism of normal pseudo-$\Omega$-coalgebras 
$$(G, \overline{G})\:(\C,\N, \lambda\: \C \to \ses{\C}, \lambda_\delta)\to (\D,\N', \mu\: \D \to \ses{\D}, \mu_\delta).$$
$G$ is a functor that preserves kernels and cokernels by hypothesis. We now construct the natural transformation $\overline{G}$. Given $X\in \C$, the component of $\overline{G}$ on $X$ is an isomorphism in $\ses{\D}$ of the form
\begin{cd}
    {T^{G(X)}} \& {G(X)} \& {F^{G(X)}} \\
	{G(T^X)} \& {G(X)} \& {G(F^X)}
	\arrow["{\ell^{G(X)}}", from=1-1, to=1-2]
	\arrow["{{\overline{G}}^1_X}"',aiso, from=1-1, to=2-1]
	\arrow["{r^{G(X)}}", from=1-2, to=1-3]
	\arrow[equal,  from=1-2, to=2-2]
	\arrow["{{\overline{G}}^3_X}"',aiso,  from=1-3, to=2-3]
	\arrow["{G(\ell^X)}"', from=2-1, to=2-2]
	\arrow["{G(r^X)}"', from=2-2, to=2-3]
\end{cd}
Since $G$ preserves the torsion class, we have that $G(T^X)\in \T'$. This implies that the composite 
$$G(T^X) \ar{G(\ell^X)} G(X) \ar{r^{G(X)}} F^{G(X)}$$
is null, since it is a morphism from an object of $\T'$ to an object of $\F'$. So there exists a unique morphism $G(T^X) \ar{\xi} T^{G(X)}$ such that $\ell^{G(X)} \c  \xi=G(\ell^{X})$. We show that such $\xi$ is an isomorphism. Since $G$ preserves kernels, we have that $G(\ell^X)$ is the kernel of $G(r^X)$ and thus 
$$G(T^X) \ar{G(\ell^X)} G(X) \ar{G(r^X)} G(F^X)$$
is a short exact sequence. Moreover, $G(r^X) \c \ell^{G(X)}$ is null because $G(F^X)\in \F'$. Whence, there exists a unique morphism $\xi'\: T^{G(X)} \to G(T^X)$ such that $G(\ell^X) \c \xi'= \ell^{G(X)}$. Since $G(\ell^X)$ is a monomorphism, we have $\xi' \c \xi=\id{T^{G(X)}}$. And since $\ell^{G(X)}$ is a monomorphism, we have $\xi \c \xi'=\id{G(T^X)}$. So $\xi$ is an isomorphism and we define $\overline{G}^1_X:=\xi$. We then define the isomorphism $\overline{G}^3_X$ dually. By construction, the triple $({G}^1_X, \id{G(X)}, {G}^3_X)$ is a morphism in $\ses{\D}$. 

We now show that $\overline{G}$ is natural. Let $h\: X \to Y$ be a morphism in $\C$. The naturality square for $h$ is the equality of the following two composite morphisms in $\ses{\D}$:
\begin{eqD*}
\begin{cd}*
{T^{G(X)}} \& {G(X)} \& {F^{G(X)}} \\
	{G(T^X)} \& {G(X)} \& {G({F^{X}})} \\
	{G(T^Y)} \& {G(Y)} \& {G(F^Y)}
	\arrow["{\ell^{G(X)}}", from=1-1, to=1-2]
	\arrow["{\overline{G}^1_X}"', from=1-1, to=2-1]
	\arrow["{r^{G(X)}}", from=1-2, to=1-3]
	\arrow[from=1-2, to=2-2, equal]
	\arrow["{\overline{G}^3_{X}}", from=1-3, to=2-3]
	\arrow["{G(\ell^X)}"', from=2-1, to=2-2]
	\arrow["{G(h^T)}"', from=2-1, to=3-1]
	\arrow["{G(r^X)}"', from=2-2, to=2-3]
	\arrow["{G(h)}", from=2-2, to=3-2]
	\arrow["{G(h^F)}", from=2-3, to=3-3]
	\arrow["{G(\ell^Y)}"', from=3-1, to=3-2]
	\arrow["{G(r^Y)}"', from=3-2, to=3-3]
\end{cd}
\qquad
\begin{cd}*
{T^{G(X)}} \& {G(X)} \& {F^{G(X)}} \\
	{T^{G(Y)}} \& {G(Y)} \& {F^{G(Y)}} \\
	{G(T^Y)} \& {G(Y)} \& {G(F^Y)}
	\arrow["{\ell^{G(X)}}", from=1-1, to=1-2]
	\arrow["{G(h)^T}"', from=1-1, to=2-1]
	\arrow["{r^{G(X)}}", from=1-2, to=1-3]
	\arrow["{G(h)}"', from=1-2, to=2-2]
	\arrow["{G(h)^F}", from=1-3, to=2-3]
	\arrow["{\ell^{G(Y)}}"', from=2-1, to=2-2]
	\arrow["{\overline{G}^1_Y}"', from=2-1, to=3-1]
	\arrow["{r^{G(Y)}}"', from=2-2, to=2-3]
	\arrow[from=2-2, to=3-2, equal]
	\arrow["{\overline{G}^3_Y}", from=2-3, to=3-3]
	\arrow["{G(\ell^Y)}"', from=3-1, to=3-2]
	\arrow["{G(r^Y)}"', from=3-2, to=3-3]
\end{cd}
\end{eqD*}
Since $G(\ell^Y)$ is a monomorphism, the first components of the two composite are equal. Moreover, the second components are trivially equal and the third components are equal thanks to the fact that $r^{G(X)}$ is an epimorphism. So $\overline{G}$ is natural.
It is then straightforward to see that the required axioms are satisfied.
So $(G, \overline{G})$ is a pseudo-morphism of normal pseudo-$\Omega$-coalgebras.

A 2-cell between the pseudo-morphisms of pseudo-coalgebras
$$(G, \overline{G}), (H, \overline{H})\:(\C,\N, \lambda\: \C \to \ses{\C}, \lambda_\delta)\to (\D,\N', \mu\: \D \to \ses{\D}, \mu_\delta)$$
is a natural transformation $\alpha: G \aR{} H$ such that for every $X\in \C$ the following equality of composites holds in $\ses{\D}$
\begin{eqD*}
    \begin{cd}*
    {T^{G(X)}} \& {G(X)} \& {F^{G(X)}} \\
	{T^{H(X)}} \& {H(X)} \& {F^{H(X)}} \\
	{H(T^X)} \& {H(X)} \& {H(F^X)}
	\arrow["{\ell^{G(X)}}", from=1-1, to=1-2]
	\arrow["{\alpha^T_X}"', from=1-1, to=2-1]
	\arrow["{r^{G(X)}}", from=1-2, to=1-3]
	\arrow["{\alpha_X}"', from=1-2, to=2-2]
	\arrow["{\alpha^F_X}", from=1-3, to=2-3]
	\arrow["{\ell^{H(X)}}"', from=2-1, to=2-2]
	\arrow["{\overline{H}^1_X}"', from=2-1, to=3-1]
	\arrow["{r^{H(X)}}"', from=2-2, to=2-3]
	\arrow[from=2-2, to=3-2, equal]
	\arrow["{\overline{H}^3_X}", from=2-3, to=3-3]
	\arrow["{H(\ell^X)}"', from=3-1, to=3-2]
	\arrow["{H(r^X)}"', from=3-2, to=3-3]
    \end{cd}
    \quad = \quad
    \begin{cd}*
    {T^{G(X)}} \& {G(X)} \& {F^{G(X)}} \\
	{G(T^X)} \& {G(X)} \& {G(F^X)} \\
	{H(T^X)} \& {H(X)} \& {H(F^X)}
	\arrow["{\ell^{G(X)}}", from=1-1, to=1-2]
	\arrow["{\overline{G}^1_X}"', from=1-1, to=2-1]
	\arrow["{r^{G(X)}}", from=1-2, to=1-3]
	\arrow[from=1-2, to=2-2, equal]
	\arrow["{\overline{G}^3_X}", from=1-3, to=2-3]
	\arrow["{G(\ell^{X})}"', from=2-1, to=2-2]
	\arrow["{\alpha_{T^X}}"', from=2-1, to=3-1]
	\arrow["{G(r^{X})}"', from=2-2, to=2-3]
	\arrow["{\alpha_{X}}", from=2-2, to=3-2]
	\arrow["{\alpha_{F^X}}", from=2-3, to=3-3]
	\arrow["{H(\ell^X)}"', from=3-1, to=3-2]
	\arrow["{H(r^X)}"', from=3-2, to=3-3]
    \end{cd}
\end{eqD*}
Clearly, every 2-cell between pseudo-morphisms of coalgebras yields a 2-cell between morphisms of pretorsion theories that is simply the underlying natural transformation. Conversely, every natural transformation $\alpha\:G \aR{} H$ between morphisms of bihereditary torsion theories satisfies the required axiom for a 2-cell between pseudo-morphisms of pseudo-coalgebras, because $H(\ell^X)$ is a monomorphism and $r^{G(X)}$ is an epimorphism for every $X\in \C$.

We have thus constructed two assignments 
 \begin{fun}
	\Theta & \: & \Alg+'{\Omega}\hphantom{CC} & \too & \BiHPTors \\[0.3ex]
    && \tcv*{(\C,\N, \lambda\: \C \to \ses{\C}, \lambda_\delta)}{(\D,\N', \mu\: \D \to \ses{\D}, \mu_\delta)}{(G, \overline{G})}{(H, \overline{H})}{\alpha} & \mto & \tcv*{(\C,\T,\F)}{(\D,\T',\F')}{G}{H}{[\alpha]}
    \end{fun}
and 
\begin{fun}
	\Gamma & \: & \BiHPTors & \too & \hphantom{CC}\Alg+'{\Omega} \\[0.3ex]
    && \tcv*{(\C,\T,\F)}{(\D,\T',\F')}{G}{H}{[\alpha]} & \mto & \tcv*{(\C,\N, \lambda\: \C \to \ses{\C}, \lambda_\delta)}{(\D,\N', \mu\: \D \to \ses{\D}, \mu_\delta)}{(G, \overline{G})}{(H, \overline{H})}{\alpha}
    \end{fun}
$\Theta$ is clearly a 2-functor by construction and it is straightforward to prove that also $\Gamma$ is a 2-functor. Furthermore, the composite $\Theta \c \Gamma$ is the identity functor. Indeed, the objects $A\in \C$ such that the short exact sequence $T^A \ar{\ell^A} A \ar{r^A} F^{A}$ is in $\T_{\ses{\C}}$ are precisely the objects of $\T$. So $\T=\lambda^{-1}(\T_{\ses{\C}})$. Dually, $\F=\lambda^{-1}(\F_{\ses{\C}})$. And it is straightforward to see that $\Theta \c \Gamma$ is the identity on morphisms and 2-cells.
 It is then straightforward to prove that composite $\Gamma \c \Theta$ is the identity functor as well. Notice that choosing the short exact sequences associated to every object uniquely determines the torsion and torsion-free parts of the morphisms. Moreover, from the axioms of pseudo-coalgebra, it follows that $\lambda_{\delta}$, on every component $X\in \C$, is uniquely determined by $\ell^{T^X}$ and $r^{F^X}$. Finally, the fact that $\Gamma \c \Theta$ is the identity on morphisms follow from the fact that there can only be one $\overline{G}$ associated to $G$, because $G(\ell^X)$ is a monomorphism and $r^{G(X)}$ is an epimorphism.
 
We thus conclude that the 2-category of pseudo-coalgebras for $\Omega$ is 2-isomorphic to the 2-category $\BiHPTors$ of bihereditary pretorsion theories, over $\ClIdl$. Notice that, although $\Omega$ is a pseudo-comonad, the pseudo-$\Omega$-coalgebras form a 2-category, thanks to the fact that $\Omega$ is a 2-functor and that given $G$ there is only at most one $\overline{G}$.
\end{proof}

\begin{remark}
    The pretorsion theory on $\Ses{\C}$ that we built in \ref{proppretorsses} is thus the cofree bihereditary pretorsion theory on $\C\in \ClIdl$.
\end{remark}

\begin{remark}\label{remideaextension}
    It is the fact that $\lambda\:\C \to \Ses{\C}$ preserves kernels and cokernels that implies that the pseudo-coalgebras for $\Omega$ are pretorsion theories with additional properties, namely bihereditary, rather than all pretorsion theories. Notice that $\lambda$ has to preserve kernels and cokernels because of the construction of $\ClIdl$. We defined $\ClIdl$ in that way to guarantee that a morphism $G\:\C\to \D$ in $\ClIdl$ yields a functor $\Ses{\C}\to \Ses{D}$ that preserves short exact sequences. Interestingly, this is actually the only point that relies on the choice that morphisms in $\ClIdl$ preserve kernels and cokernels. This observation opens the way for the extension of the pseudo-comonad $\Omega$ that we present in the following section.
\end{remark}

\section{A comonad for generalized pretorsion theories}

In this section, we extend the pseudo-comonad $(\Omega,\delta,\epsilon)$ that we built in \thex\ref{theorbih}. We obtain another pseudo-comonad on a 2-category of categories equipped with closed ideals. With this, we reach our aim: all pretorsion theories are coalgebras for this pseudo-comonad. But interestingly, pseudo-coalgebras correspond to a generalization of pretorsion theories.

\remx\ref{remideaextension} gives the idea to extend the 2-category $\ClIdl$ in the following way.

\begin{definition}
    Let $\C$ be a category equipped with a closed ideal that has all relative kernels and cokernels. An \dfn{exact sequence} in $\C$ is a null sequence of composable morphisms $X\ar{f} Y \ar{g} Z$ such that considering the induced diagram
    \begin{cd}[4]
    \& {K(g)} \\
	X \&\& Y \&\& Z \\
	\&\&\& {C(f)}
	\arrow["{\ker{g}}", from=1-2, to=2-3]
	\arrow["{\exists ! \xi^1}", dashed, from=2-1, to=1-2]
	\arrow["f"', from=2-1, to=2-3]
	\arrow["g", from=2-3, to=2-5]
	\arrow["{\coker{f}}"', from=2-3, to=3-4]
	\arrow["{\exists !\xi^2}"', dashed, from=3-4, to=2-5]
    \end{cd}
    we have that 
    $$K(g)\ar{\ker{g}} Y \ar{\coker{f}} C(f)$$
    is a short exact sequence, called the \dfn{short exact sequence replacement} of the original sequence, $\xi^1$ coreflects null morphisms and $\xi^2$ reflects null morphisms.
\end{definition}

\begin{remark}\label{remchoice}
    Given a category $\C$ equipped with a closed ideal that has all kernels and cokernels, we fix a choice, for every exact sequence in $\C$, of its short exact sequence replacement. For the exact sequences that are actually short exact, we choose the replacement to be exactly the original sequence.
\end{remark}

To better understand the definition above, recall the following known characterization of reflecting null morphisms.

\begin{proposition}\label{propcharactreflectnull}
    Let $\xi\:W\to Z$ be a morphism in a category $\C$ equipped with a closed ideal that has all kernels and cokernels. Then $\xi$ reflects null morphisms if and only if the kernel $K(\xi)$ of $\xi$ is a null object.
\end{proposition}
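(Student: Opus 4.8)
The plan is to characterise when $\xi$ reflects null morphisms purely in terms of its kernel $K(\xi)$. Recall that $\xi$ reflects null morphisms means: for every morphism $g\colon W'\to W$, if $\xi\c g\in\N$ then $g\in\N$.

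First I would prove the forward implication. Suppose $\xi$ reflects null morphisms. Consider the kernel $K(\xi)\ar{\ker{\xi}}W$. By definition of $\N$-kernel, $\xi\c\ker{\xi}$ is a null morphism, so $\xi\c\ker{\xi}\in\N$. Since $\xi$ reflects null morphisms, $\ker{\xi}\in\N$, i.e.\ $\ker{\xi}$ factors through a null object. Now I would use the standard fact that if a monomorphism factors through a null object then its domain is a null object: indeed $\ker{\xi}$ is a monomorphism (kernels always are), and factoring $\ker{\xi}=n\c m$ with the middle object null forces $m$ to be a split monomorphism into a null object; since the identity on a null object is null (so that $m$ is null), one concludes $K(\xi)$ is a null object. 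Alternatively, and more directly: since $\ker{\xi}\in\N$, the identity morphism $\id{K(\xi)}$ is null because precomposing gives $\ker{\xi}\c\id{K(\xi)}=\ker{\xi}\in\N$ and $\ker{\xi}$ is a monomorphism reflecting... — no, rather I would argue $\id{K(\xi)}$ is a null morphism: $\ker{\xi}$ itself, being a kernel, reflects null morphisms (this is the standard fact that $\N$-kernels reflect $\N$-morphisms), and $\ker{\xi}\c\id{K(\xi)}=\ker{\xi}$ is null, hence $\id{K(\xi)}$ is null, which exactly says $K(\xi)$ is a null object.

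Next the converse. Suppose $K(\xi)$ is a null object. Let $g\colon W'\to W$ with $\xi\c g\in\N$, so in particular $\xi\c g$ is a null morphism (every morphism in a closed ideal is null). By the universal property of $K(\xi)=\ker{\xi}$ applied to $g$, there is a (unique) morphism $g'\colon W'\to K(\xi)$ with $\ker{\xi}\c g'=g$. But $K(\xi)$ is a null object, so $g'$ factors through a null object, hence $g'\in\N$; since $\N$ is an ideal, $g=\ker{\xi}\c g'\in\N$. Thus $\xi$ reflects null morphisms.

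The main subtlety — though it is genuinely routine given the framework of \cite{G13} — is pinning down the precise bookkeeping of the two notions of "null": being a \emph{null morphism} (factoring through a null object) versus belonging to the \emph{closed ideal} $\N$, and the fact that for a closed ideal these agree on morphisms between the relevant objects, so that "$\xi\c g\in\N$" and "$\xi\c g$ is a null morphism" can be used interchangeably here. Once that identification is in hand, both implications are immediate from the universal property of kernels and the observation that kernels are monomorphisms reflecting null morphisms; I do not expect any real obstacle.
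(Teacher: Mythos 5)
Your proof is correct. Note that the paper itself gives no proof of this proposition --- it is explicitly ``recalled'' as a known fact --- so there is nothing to compare against; your argument is the standard one, and it uses precisely the two ingredients the paper invokes in the surrounding text: that for a \emph{closed} ideal the morphisms of $\N$ are exactly those factoring through a null object, and that kernels automatically reflect null morphisms (stated immediately after the proposition). The only wobble is your first, abandoned attempt at the forward direction (the claim that the factorisation of the monomorphism $\ker{\xi}$ through a null object forces a \emph{split} monomorphism is not justified as stated), but this is harmless since your ``alternatively, and more directly'' argument --- $\ker{\xi}$ reflects null morphisms, $\ker{\xi}\c\id{K(\xi)}=\ker{\xi}$ is null, hence $\id{K(\xi)}$ is null and $K(\xi)$ is a null object --- is complete and correct, as is your converse via the universal property of the kernel.
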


Moreover, recall that every kernel in a category with a closed ideal automatically reflects null morphisms, and dually for cokernels.

We can now extend the ground 2-category $\ClIdl$ that we considered in the previous section.

We define $\ClIdlex$ to be the 2-category given by the following:
\begin{description}
    \item[an object of $\ClIdlex$ is] an object of $\ClIdl$, i.e.\ a pair $(\C,\N)$ of a category $\C$ equipped with a closed ideal $\N$, that has all $\N$-kernels and all $\N$-cokernels;
    \item[a morphism from $(\C,\N)$ to $(\D,\M)$ is] a functor $F\:\C\to \D$ that preserves all exact sequences;
    \item[a 2-cell is] just a natural transformation between functors.
\end{description}

\begin{proposition}
    Every morphism in $\ClIdlex$ automatically preserves null objects, morphisms that reflect null morphisms and morphisms that coreflect null morphisms.

    Moreover, every morphism in $\ClIdl$ is a morphism in $\ClIdlex$.
\end{proposition}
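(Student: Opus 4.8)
The plan is to deduce every clause from the single hypothesis defining a morphism of $\ClIdlex$, namely preservation of exact sequences, by first rephrasing the notions of \emph{null object}, of \emph{reflecting null morphisms} and of \emph{coreflecting null morphisms} as exactness of suitable short sequences built from identities, kernels and cokernels. First I would record that an object $N$ is null if and only if $N \ar{\id{N}} N \ar{\id{N}} N$ is exact: one direction is immediate since an exact sequence is in particular a null sequence, so $\id{N}$ is null; for the converse, when $N$ is null the morphism $\id{N}$ is null, so by \lemx\ref{lemmino} we have $K(\id{N})=N=C(\id{N})$ with $\ker{\id{N}}=\id{N}=\coker{\id{N}}$, the short exact sequence replacement is the short exact sequence $N\aeqq{}N\aeqq{}N$, and both comparison maps equal $\id{N}$, which trivially (co)reflect null morphisms. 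Since a morphism $F$ of $\ClIdlex$ is in particular a functor, it carries this exact sequence to $FN \ar{\id{FN}} FN \ar{\id{FN}} FN$, which is again exact; hence $FN$ is a null object.

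Next I would handle the two reflection clauses, using \prox\ref{propcharactreflectnull} together with the null-object case just proved. Let $\xi\colon W\to Z$ reflect null morphisms, so $K(\xi)$ is a null object by \prox\ref{propcharactreflectnull}; then $\ker{\xi}$ is a null morphism and $\coker{\ker{\xi}}=\id{W}$ by \lemx\ref{lemmino}, and one checks (again via \lemx\ref{lemmino} and closedness of the ideal) that $K(\xi) \ar{\ker{\xi}} W \ar{\xi} Z$ is exact, with short exact sequence replacement $K(\xi)\ar{\ker{\xi}} W\aeqq{}W$ and comparison maps $\id{K(\xi)}$ and $\xi$. Applying the exactness-preserving functor $F$ and using that $F(K(\xi))$ is null by the previous paragraph, $F(K(\xi)) \ar{F(\ker{\xi})} FW \ar{F(\xi)} FZ$ is exact with $F(\ker{\xi})$ null, so its short exact sequence replacement is $K(F(\xi))\ar{\ker{F(\xi)}}FW\aeqq{}FW$; being short exact this forces $K(F(\xi))$ to be a null object, hence $F(\xi)$ reflects null morphisms by \prox\ref{propcharactreflectnull}. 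The coreflection clause is the formal dual, using the exact sequence $W\ar{\xi}Z\ar{\coker{\xi}}C(\xi)$ and the dual of \prox\ref{propcharactreflectnull}.

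For the last clause I would show directly that a functor $F$ preserving kernels and cokernels preserves exact sequences. Given an exact sequence $X\ar{f}Y\ar{g}Z$, the sequence $FX\ar{Ff}FY\ar{Fg}FZ$ is a null sequence because $F$ preserves null objects, hence null morphisms, by \prox\ref{propclidl}; its short exact sequence replacement is, modulo the canonical isomorphisms $F(K(g))\iso K(Fg)$ and $F(C(f))\iso C(Ff)$ coming from preservation of kernels and cokernels, the image under $F$ of the short exact sequence $K(g)\ar{\ker{g}}Y\ar{\coker{f}}C(f)$, which is short exact since $F$ preserves short exact sequences by \prox\ref{propclidl}; and by uniqueness in the relevant universal properties the two comparison maps for $FX\to FY\to FZ$ are (up to these isomorphisms) $F(\xi^1)$ and $F(\xi^2)$, which coreflect resp. reflect null morphisms because $C(\xi^1)$ and $K(\xi^2)$ are null by \prox\ref{propcharactreflectnull} and $F$ preserves cokernels, kernels and null objects. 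Thus $FX\ar{Ff}FY\ar{Fg}FZ$ is exact and $F$ is a morphism of $\ClIdlex$.

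The only place any real care is needed — the ``hard part'', such as it is — is checking that the small sequences introduced in the first two paragraphs are genuinely exact in the sense of the definition: one has to pin down the short exact sequence replacement explicitly and verify that the comparison maps $\xi^1,\xi^2$ (co)reflect null morphisms, which is exactly where \lemx\ref{lemmino} (kernels and cokernels of null morphisms and of isomorphisms) and \prox\ref{propcharactreflectnull} enter. The structural subtlety is the order of the argument: the null-object clause must come first, since it is precisely its preservation that makes $F(\ker{\xi})$ (respectively $F(\coker{\xi})$) a null morphism and thereby collapses the short exact sequence replacement in the reflection clauses.
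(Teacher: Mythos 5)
Your proof is correct and follows essentially the same route as the paper: encode null objects and (co)reflecting morphisms as exactness of the sequences $N=N=N$ and $K(\xi)\to W\to Z$, push these through the exactness-preserving functor, and for the last clause transport the short exact sequence replacement along the canonical isomorphisms $F(K(g))\cong K(Fg)$ and $F(C(f))\cong C(Ff)$. The only cosmetic difference is that at the end of the reflection step you pass through \prox\refs{propcharactreflectnull} to see that $K(F(\xi))$ is null, whereas the paper reads off that $F(\xi)$ reflects null morphisms directly from the $\xi^2$-clause in the definition of exact sequence; both are valid.
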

\begin{proof}
    Let $G$ be a morphism in $\ClIdlex$ and let $Z$ be a null object. We have that $Z\aeqq{} Z\aeqq{} Z$ is a short exact sequence, and thus $G(Z)\aeqq{} G(Z) \aeqq{} G(Z)$ is an exact sequence. Since $K(\id{G(Z)})$ and $C(\id{G(Z)})$ are both null objects, we obtain that $G(Z)$ is a null object as well.

    Consider now a morphism $h\:X\to Y$ in $\C$ that reflects null morphisms. By \prox\ref{propcharactreflectnull}, $K(h)$ is a null object and thus the cokernel of $\ker{h}$ is the identity of $X$. This implies that
    $$K(h)\ar{\ker{h}}X\ar{h} Y$$
    is an exact sequence. Indeed, its short exact replacement is $K(h)\ar{\ker{h}} X\aeqq{} X$, and $h$ reflects null morphisms by assumption. Then $G$ preserves the exactness of such exact sequence. Since $G(K(h))$ is a null object, by what we proved above, the cokernel of $G(\ker{h})$ is the identity. And we conclude that $G(h)$ has to reflect null morphisms, by definition of exact sequence. Dually, $G$ preserves morphisms that coreflect null morphisms.

    Assume now that $G\:\C\to \D$ is a morphism in $\ClIdl$ and consider an exact sequence $X\ar{f} Y \ar{g} Z$ in $\C$. Call $\xi^1$ and $\xi^2$ the induced morphisms $X\to \ker{g}$ and $\coker{f}\to Z$ respectively. Then $G$ sends the short exact replacement
    $$K(g)\ar{\ker{g}} Y \ar{\coker{f}} C(f)$$
    of the exact sequence to a short exact sequence. Since $G$ preserves kernels and cokernels, we obtain that
    $$K(G(g))\ar{\ker{G(g)}} G(Y) \ar{\coker{G(f)}} C(G(f))$$
    is a short exact sequence. Finally, consider the induced ${\xi'}^1\:G(X)\to K(G(g))$ and ${\xi'}^2\:C(G(f))\to G(Z)$. Since $G(\coker(f))$ is a cokernel and thus epi, we have that the composite of ${\xi'}^2$ with the canonical isomorphism $G(C(f))\iso C(G(f))$ coincides with $G(\xi^2)$. Since the $\xi^2$ has null kernel and $G$ preserves kernels and null objects, we deduce that $G(\xi^2)$ has null kernel. But then also ${\xi'}^2$ has null kernel. Dually for ${\xi'}^1$.
\end{proof}

We now extend the pseudo-comonad $\Omega$ to the larger ground base $\ClIdlex$.

\begin{construction}\label{consomegaex}
    The 2-functor $\Omega$ extends to a pseudofunctor
    \begin{fun}
	\Oex & \: & \ClIdlex & \too & \ClIdlex \\[0.3ex]
    && \tcv*{\C}{\D}{G}{H}{\alpha} & \mto & \tcv*{\Ses{\C}}{\Ses{\D}}{[G]}{[H]}{[\alpha]}
\end{fun}
On objects, the action of $\Oex$ is the same as that of $\Omega$. Given a functor $G\:\C\to \D$ that preserves exact sequences, we define the functor $[G]\:\Ses{\C}\to \Ses{\D}$ to send a short exact sequence $X\ar{f} Y \ar{g} Z$ to the short exact sequence replacement of the exact sequence $G(X)\ar{G(f)} G(Y) \ar{G(g)} G(Z)$. Given then a morphism $(u,v,w)$ in $\Ses{\C}$ from $X\ar{f} Y \ar{g} Z$ to $X'\ar{f'} Y' \ar{g'} Z'$, we define $[G](u,v,w)$ to be the morphism $([G]^1(u,v,w),G(v),[G]^2(u,v,w))$ given as follows:
\begin{cd}[4]
    \& {K(G(g))} \\
	{G(X)} \&\& {G(Y)} \&\& {G(Z)} \\
	\&\&\& {C(G(f))} \\[-3ex]
	\& {K(G(g'))} \\
	{G(X')} \&\& {G(Y')} \&\& {G(Z')} \\
	\&\&\& {C(G(f'))}
	\arrow["{\ker{G(g)}}", from=1-2, to=2-3]
	\arrow[bend right,"{[G]^1(u,v,w)}"'{pos=0.55}, dashed, from=1-2, to=4-2]
	\arrow["{\exists ! \xi^1_G}", dashed, from=2-1, to=1-2]
	\arrow["{G(f)}"', from=2-1, to=2-3]
	\arrow["{G(u)}"', from=2-1, to=5-1]
	\arrow["{G(g)}", from=2-3, to=2-5]
	\arrow["{\coker{G(f)}}"', from=2-3, to=3-4]
	\arrow["{G(v)}", from=2-3, to=5-3]
	\arrow["{G(w)}", from=2-5, to=5-5]
	\arrow["{\exists !\xi^2_G}"', dashed, from=3-4, to=2-5]
	\arrow[bend left,"{[G]^2(u,v,w)}"{{pos=0.45}}, dashed, from=3-4, to=6-4]
	\arrow["{\ker{G(g')}}", from=4-2, to=5-3]
	\arrow["{\exists ! {\xi'}^1_G}", dashed, from=5-1, to=4-2]
	\arrow["{G(f')}"', from=5-1, to=5-3]
	\arrow["{G(g')}", from=5-3, to=5-5]
	\arrow["{\coker{G(f')}}"', from=5-3, to=6-4]
	\arrow["{\exists ! {\xi'}^2_G}"', dashed, from=6-4, to=5-5]
\end{cd}
The morphism $[G]^1(u,v,w)$ is the unique one induced by the universal property of $K(G(g'))$ starting from the null morphism $G(g')\c G(v)\c \ker{G(g)}=G(w)\c G(g)\c \ker{G(g)}$. Dually for $[G]^2(u,v,w)$. It is straightforward to prove that $[G]$ is a functor. Indeed, this is given by the fact that $[G]^1(u,v,w)$ and $[G]^2(u,v,w)$ are the unique morphisms that can fit in the relevant squares, since $\ker{G(g')}$ is mono and $\coker{G(f)}$ is epi.

Notice that when $G$ is actually a morphism in $\ClIdl$, and thus preserves short exact sequences, $[G]$ coincides with the functor $G\:\Ses{C}\to\Ses{\D}$ defined in \conx\ref{conscomonadbih}. This is also thanks to the choice we made in \remx\ref{remchoice}.

We now show that for every functor $G$ that preserves exact sequences, also $[G]$ preserves exact sequences, and is thus a morphism in $\ClIdlex$. It is straightforward to prove that a null sequence $(u',v',w')\c (u,v,w)$ of morphisms in $\Ses{\C}$ is an exact sequence if and only if the sequence $v'\c v$ in the middle column is an exact sequence and $K(u')\ar{\ker{u'}} X' \ar{} K(\rho)$ is a kernel and the dual holds, where $\rho$ is the unique appropriately induced morphism $C(v)\to C(w)$. It is then straightforward to prove that $[G]$ preserves exact sequences.

Finally, given a 2-cell $\alpha\:G\aR{}H\:\C\to \D$ in $\ClIdlex$, i.e.\ a natural transformation, we construct the natural transformation $[\alpha]\:[G]\aR{}[H]\:\Ses{\C}\to \Ses{\D}$ by setting its component on a short exact sequence $X\ar{f} Y \ar{g} Z$ to be the morphism $([\alpha]_{f,g}^1,\alpha_Y,[\alpha]_{f,g}^2)$ given as follows:
\begin{cd}[4]
	\& {K(G(g))} \\
	{G(X)} \&\& {G(Y)} \&\& {G(Z)} \\
	\&\&\& {C(G(f))} \\[-3ex]
	\& {K(H(g))} \\
	{H(X)} \&\& {H(Y)} \&\& {H(Z)} \\
	\&\&\& {C(H(f))}
	\arrow["{\ker{G(g)}}", from=1-2, to=2-3]
	\arrow[bend right,"{\exists ! [\alpha]_{f,g}^1}"'{pos=0.55}, dashed, from=1-2, to=4-2]
	\arrow["{\exists ! \xi^1_G}", dashed, from=2-1, to=1-2]
	\arrow["{G(f)}"', from=2-1, to=2-3]
	\arrow["{\alpha_X}"', from=2-1, to=5-1]
	\arrow["{G(g)}", from=2-3, to=2-5]
	\arrow["{\coker{G(f)}}"', from=2-3, to=3-4]
	\arrow["{\alpha_Y}", from=2-3, to=5-3]
	\arrow["{\alpha_Z}", from=2-5, to=5-5]
	\arrow["{\exists !\xi^2_G}"', dashed, from=3-4, to=2-5]
	\arrow[bend left,"{\exists ! [\alpha]_{f,g}^2}"{pos=0.45}, dashed, from=3-4, to=6-4]
	\arrow["{\ker{H(g)}}", from=4-2, to=5-3]
	\arrow["{\exists ! \xi^1_H}", dashed, from=5-1, to=4-2]
	\arrow["{H(f)}"', from=5-1, to=5-3]
	\arrow["{H(g)}", from=5-3, to=5-5]
	\arrow["{\coker{H(f)}}"', from=5-3, to=6-4]
	\arrow["{\exists !\xi^2_H}"', dashed, from=6-4, to=5-5]
\end{cd}
The morphism $[\alpha]_{f,g}^1$ is the unique one induced by the universal property of the kernel $K(H(g))$ starting from the null morphism
$$H(g)\c \alpha_Y \c \ker{G(g)}=\alpha_Z\c G(g)\c \ker{G(g)}.$$
Dually for $[\alpha]_{f,g}^2$. It is straightforward to prove that $[\alpha]$ is a natural transformation. Indeed, this is given by the fact that $[\alpha]_{f,g}^1$ and $[\alpha]_{f,g}^2$ are the unique morphisms that can fit in the relevant squares, since $\ker{H(g)}$ is mono and $\coker{G(f)}$ is epi.

It is then straightforward to prove that $\Oex$ is a (normal) pseudofunctor. The fact that it preserves identities is easy to show, thanks to choice we made in \remx\ref{remchoice}. Consider then $\C\ar{G}\D\ar{H}\E$ in $\ClIdlex$. We construct a natural isomorphism
\begin{cd}[4]
    \& {\Ses{\D}} \\
	{\Ses{\C}} \&\& {\Ses{\E}}
	\arrow["{[H]}", from=1-2, to=2-3]
	\arrow["{[G]}", from=2-1, to=1-2]
	\arrow["{[H\c G]}"', from=2-1, to=2-3]
	\arrow["\beta",shift left=2ex, iso, from=2-1, to=2-3]
\end{cd}
Given a short exact sequence $X\ar{f} Y \ar{g} Z$ in $\C$, consider the diagram
\begin{cd}[4][-2]
	\&\& {K(H(\coker{G(f)}))} \&\& {C(H(\ker{G(g)}))} \\
	\& {H(K(G(g)))} \&\&\&\& {H(C(G(f)))} \\
	{H(G(X))} \&\&\& {H(G(Y))} \&\&\& {H(G(Z))} \\
	\&\& {K(H(G(g)))} \&\& {C(H(G(f)))}
	\arrow["{\ker{H(\coker{G(f)})}\quad\h[3]}"{description}, from=1-3, to=3-4]
	\arrow[dashed, from=1-3, to=4-3]
	\arrow["{\xi^2_H}", from=1-5, to=2-6]
	\arrow[dashed, from=1-5, to=4-5]
	\arrow["{\xi^1_H}", from=2-2, to=1-3]
	\arrow["{H(\ker{G(g)})}"'{inner sep=0 ex}, from=2-2, to=3-4]
	\arrow["{H(\xi^2_G)}", from=2-6, to=3-7]
	\arrow["{H(\xi^1_G)}", from=3-1, to=2-2]
	\arrow["{H(G(f))}"', from=3-1, to=3-4]
	\arrow["{\xi^1_{H\c G}}"', from=3-1, to=4-3]
	\arrow["{\h[3]\quad\coker{H(\ker{G(g)})}}"{description}, from=3-4, to=1-5]
	\arrow["{H(\coker{G(f)})}"'{inner sep=0 ex}, from=3-4, to=2-6]
	\arrow["{H(G(g))}"', from=3-4, to=3-7]
	\arrow["{\coker{H(G(f))}}"'{description}, from=3-4, to=4-5]
	\arrow["{\ker{H(G(g))}}"'{description}, from=4-3, to=3-4]
	\arrow["{\xi^2_{H\c G}}"', from=4-5, to=3-7]
\end{cd}
We define the component $\beta_{f,g}$ of $\beta$ to be given by the dashed morphisms in the diagram above, that are induced by the universal property of the relevant kernels and cokernels. This is thanks to the fact that $H(G(g))\c \ker{H(\coker{G(f)})}$ is a null morphism, and the dual fact. The components of $\beta$ are isomorphisms because we can also induce the inverses of the dashed morphisms in the diagram above. This heavily relies on the fact that $\xi^2_G$ and thus $H(\xi^2_G)$ reflect null morphisms, and the dual fact. It is straightforward to prove that $\beta$ is a natural isomorphism, and that $\Oex$ is a pseudo-functor. 
\end{construction}

\begin{theorem}\label{theoromegaex}
    The pseudofunctor
    \begin{fun}
	\Oex & \: & \ClIdlex & \too & \ClIdlex \\[0.3ex]
    && \tcv*{\C}{\D}{G}{H}{\alpha} & \mto & \tcv*{\Ses{\C}}{\Ses{\D}}{[G]}{[H]}{[\alpha]}
\end{fun}
of \conx\ref{consomegaex} extends to a pseudo-comonad $\Oex$ on the 2-category $\ClIdlex$ of categories equipped with a closed ideal that have all (relative) kernels and cokernels, and functors that preserve exact sequences.

Moreover, all pretorsion theories (on semi-exact categories) are pseudo-coalgebras for $\Oex$. 
\end{theorem}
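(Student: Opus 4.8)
The plan is to extend the pseudo-comonad $(\Omega,\delta,\epsilon,\Xi)$ of \thex\ref{theorbih} along the identity-on-objects inclusion $\ClIdl\hookrightarrow\ClIdlex$ to a pseudo-comonad $\Oex$, and then to associate to every pretorsion theory on a semiexact category a normal pseudo-$\Oex$-coalgebra. For the pseudo-comonad structure I would use the same formulas as in \thex\ref{theorbih}: the counit $\epsilon_\C\:\Ses{\C}\to\C$ takes the middle object of a short exact sequence, and the comultiplication $\delta_\C\:\Ses{\C}\to\Ses{\Ses{\C}}$ is the $3\times 3$-grid functor sending a short exact sequence to its chosen short exact sequence in the pretorsion theory on $\Ses{\C}$ of \prox\ref{proppretorsses}. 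First I would check that $\epsilon_\C$ and $\delta_\C$ preserve exact sequences, so that they are morphisms in $\ClIdlex$; for $\delta_\C$ this uses the explicit characterization of exact sequences in $\Ses{\C}$ recorded in \conx\ref{consomegaex}, just as \prox\ref{propcharkerses} was used in the bihereditary case. The counit is in fact strictly $2$-natural over all of $\ClIdlex$, and compatible with the comparison isomorphisms $\beta$ of $\Oex$, because for a functor $G$ preserving exact sequences the short exact sequence replacement of $G$ applied to $X\ar{f}Y\ar{g}Z$ still has $G(Y)$ as its middle object and $[G]$ acts on morphisms with middle component $G(v)$; hence $\epsilon_\D\c[G]=G\c\epsilon_\C$ on the nose and $\beta$ restricts to identities on middle objects.

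For the pseudo-naturality of $\delta$ I would build the structure $2$-cells $\delta_G\:[[G]]\c\delta_\C\Rightarrow\delta_\D\c[G]$ for $G$ in $\ClIdlex$. The point is that in the $3\times 3$-grid defining $\delta_\C$ every object off the middle row and middle column is a kernel or a cokernel of an identity, hence a null object, and every morphism in $\ClIdlex$ preserves null objects; since the short exact sequence replacement of a null sequence one of whose outer terms is a null object is trivial, both $[[G]]\c\delta_\C$ and $\delta_\D\c[G]$ are assembled from the same data, up to canonical isomorphisms between null objects and the comparison arrows $\xi^i_G$ and $\beta$. As in \thex\ref{theorbih} and \conx\ref{consomegaex}, $\delta_G$ is then forced, being the unique morphism filling the relevant squares, which are bounded by monomorphisms that are kernels of identities and epimorphisms that are cokernels of identities, and so it is automatically invertible. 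The invertible modification $\Xi$ extends verbatim from \thex\ref{theorbih}, since there too all the non-identity components involve only null objects, and the pseudo-comonad axioms are then checked exactly as in the proof of \thex\ref{theorbih}.

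For the second statement, let $(\C,\T,\F)$ be a pretorsion theory on a semiexact category, so that $(\C,\T\cap\F)\in\ClIdl\subseteq\ClIdlex$. I would define $\lambda\:\C\to\Ses{\C}$ exactly as in \thex\ref{theorbih}, sending $X$ to its chosen short exact sequence $T^X\ar{\ell^X}X\ar{r^X}F^X$ and a morphism $h$ to the triple $(h^T,h,h^F)$ of its torsion and torsion-free parts, and the structure $2$-cell $\lambda_\delta\:[\lambda]\c\lambda\Rightarrow\delta_\C\c\lambda$ as the natural isomorphism whose component on $X$ is given by the canonical isomorphisms $T^{T^X}\iso T^X$ and $F^{T^X}\iso C(\id{T^X})$, their duals, and identities in the remaining positions, using that $\T$ is replete and $T^X\in\T$. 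The normal pseudo-coalgebra axioms are then verified as in \thex\ref{theorbih}: $\epsilon_\C\c\lambda=\Id{}$ holds by construction and gives normality, the counit axiom forces the undrawn components of $\lambda_\delta$ to be identities, and the comultiplication axiom unfolds into the same list of diagrams as there. \emph{The one genuinely new point}, and the step I expect to be the main obstacle, is that $\lambda$ is now only required to be a morphism in $\ClIdlex$, that is, to preserve exact sequences; it need not preserve kernels and cokernels, which is precisely the difference between this theorem and \thex\ref{theorbih}.

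To prove that $\lambda$ preserves exact sequences, I would take an exact sequence $A\ar{a}B\ar{b}C$ in $\C$ and apply the characterization of exact sequences in $\Ses{\C}$ from \conx\ref{consomegaex}: it reduces the exactness of $\lambda(A)\to\lambda(B)\to\lambda(C)$ to the exactness of the middle column $A\ar{a}B\ar{b}C$, which holds by hypothesis, together with two conditions saying that the torsion column and the torsion-free column induce, respectively, a kernel and a cokernel after passing to the relevant comparison objects; equivalently, one computes the short exact sequence replacement of $\lambda(A)\to\lambda(B)\to\lambda(C)$ through \prox\ref{kersesconstr} and checks that the induced arrows coreflect, respectively reflect, null morphisms. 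The main tool is that for every morphism $h$ of $\C$ the torsion and torsion-free parts $h^T$, $h^F$ inherit nullity from $h$, because $\ell^{(-)}$ is a kernel and $r^{(-)}$ a cokernel, hence reflect, respectively coreflect, null morphisms (\prox\ref{propcharactreflectnull}); more precisely one has to check that the null-coreflecting comparison $\xi^1$ and the null-reflecting comparison $\xi^2$ witnessing the exactness of $A\ar{a}B\ar{b}C$ still behave well after taking torsion and torsion-free parts. This is where the pretorsion axioms (T1) and (T2) are used in an essential, rather than purely formal, way, and carrying the argument out carefully is the technical heart of the proof; once $\lambda$ is known to be a morphism in $\ClIdlex$ the remaining checks are routine, and $(\C,\T\cap\F,\lambda,\lambda_\delta)$ is the desired normal pseudo-$\Oex$-coalgebra.
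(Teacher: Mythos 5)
Your proposal follows essentially the same route as the paper: reuse the counit, comultiplication and modification of Theorem~\ref{theorbih}, observe that they remain legitimate over $\ClIdlex$ (the paper gets this for free from the proposition that every morphism in $\ClIdl$ is a morphism in $\ClIdlex$, rather than re-verifying exactness-preservation by hand), and then exhibit each pretorsion theory as a pseudo-coalgebra via the same $\lambda$ and $\lambda_\delta$, the only genuinely new check being that $\lambda$ preserves exact sequences --- which you correctly single out as the crux and which the paper likewise settles via the characterization of exact sequences in $\Ses{\C}$ from Construction~\ref{consomegaex}. One small correction: in your argument for the pseudo-naturality of $\delta$ over $\ClIdlex$ you assert that every object of the $3\times 3$ grid off the middle row and middle column is a null object; this fails at the corners $(1,1)$ and $(3,3)$, which carry $X$ and $Z$ respectively --- only $C(\id{X})$ and $K(\id{Z})$ are null. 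The conclusion you want still holds (the comparison cell is forced by the universal properties of the kernels and cokernels involved, with the non-null corners matched up by the replacement process itself, e.g.\ $K(G(g))$ reappears in position $(1,1)$ on both sides because the relevant morphism into a null object has identity kernel), so this is a slip in the justification of a step the paper leaves to the reader rather than a genuine gap.
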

\begin{proof}
    We define the counit $\epsilon^{\operatorname{ex}}\:\Oex\aR{} \Id{}$ to have general component on $\C\in \ClIdlex$ given by the same functor $\epsilon_{\C}$ of \thex\ref{theorbih}. Since $\epsilon_{\C}$ is a morphism in $\ClIdl$, it is also a morphism in $\ClIdlex$. Moreover, this extension $\epsilon^{\operatorname{ex}}$ of $\epsilon$ remains 2-natural, as it is easy to see. Similarly, we then define the comultiplication $\delta^{\operatorname{ex}}$ to have general component on $\C\in \ClIdlex$ given by the same functor $\delta_{\C}$ of \thex\ref{theoromegaex}. Since $\delta_{\C}$ is a morphism in $\ClIdl$, it is also a morphism in $\ClIdlex$. Moreover, it is straightforward to prove that this extension $\delta^{\operatorname{ex}}$ of $\delta$ remains pseudo-natural. 

    Since the components $\delta_{\C}$ of $\delta$ preserve kernels and cokernels, and thus preserve short exact sequences, we have that $[\delta_{\C}]$ just applies $\delta_{\C}$ without the need to calculate a replacement. So the invertible modification $\Xi^{\operatorname{ex}}$ that extends the $\Xi$ of \thex\ref{theorbih} can have the same components of $\Xi$. It is then straightforward to prove that $\Xi^{\operatorname{ex}}$ is indeed a modification and that $(\Oex,\delta^{\operatorname{ex}},\epsilon^{\operatorname{ex}},\Xi^{\operatorname{ex}})$ is a pseudo-comonad. 

    A pseudo-coalgebra for $\Oex$ is $(\C,\N)\in \ClIdlex$ equipped with a coalgebra morphism $\lambda\:\C\to \Ses{\C}$ in $\ClIdlex$ and a natural isomorphism 
    \begin{cd}
        \C \& {\Ses{\C}} \\
	{\Ses{\C}} \& {\Ses{\Ses{\C}}}
	\arrow["\lambda", from=1-1, to=1-2]
	\arrow[""{name=0, anchor=center, inner sep=0}, "\lambda"', from=1-1, to=2-1]
	\arrow[""{name=1, anchor=center, inner sep=0}, "{[\lambda]}", from=1-2, to=2-2]
	\arrow["{\delta_{\C}}"', from=2-1, to=2-2]
	\arrow["{\lambda_{\delta}}"'{inner sep = 1ex}, iso, from=0, to=1]
    \end{cd}
    such that the triangle
    \begin{cd}[5]
        \C \& {\Ses{\C}} \\
	\& \C
	\arrow["\lambda", from=1-1, to=1-2]
	\arrow[bend right=20, equals, from=1-1, to=2-2]
	\arrow["{\epsilon_{\C}}", from=1-2, to=2-2]
    \end{cd}
    commutes and the axioms of pseudo-coalgebra are satisfied. Thanks to the triangle above, the coalgebra map $\lambda$ thus assigns to every $X\in \C$ a short exact sequence
    $$T^X\ar{\ell^x} X \ar{r^X}F^X$$
    with $X$ in the middle, and to every morphism $h\:X\to Y$ in $\C$ a morphism in $\Ses{\C}$
    \begin{cd}[5]
        {T^X} \& X \& {F^X} \\
	{T^Y} \& Y \& {F^Y}
	\arrow["{\ell^X}", from=1-1, to=1-2]
	\arrow["{h^T}"', from=1-1, to=2-1]
	\arrow["{r^X}", from=1-2, to=1-3]
	\arrow["h", from=1-2, to=2-2]
	\arrow["{h^F}", from=1-3, to=2-3]
	\arrow["{\ell^Y}"', from=2-1, to=2-2]
	\arrow["{r^Y}"', from=2-2, to=2-3]
    \end{cd}
    with $h$ in the middle.

    Moreover, for every $X\in \C$, $\lambda_{\delta}$ provides an isomorphism in $\Ses{\Ses{\C}}$ given by
    \begin{cd}[4][5]
        {T^{T^X}} \&[-5ex]\&[-2ex] {T^X} \&[-3ex]\&[-2ex] {F^{T^X}} \&[-5ex]\&[-5ex] {T^X} \& {T^X} \& {C(\id{T^X})} \\
	\& {K((r^X)^T)} \&\& {T^X} \&\& {C(\gamma)} \& {T^X} \& X \& {F^X} \\
	{T^X} \&\& X \&\& {F^X} \&\& {K(\id{F^X})} \& {F^X} \& {F^{F^X}} \\
	\& {K(\omega)} \&\& {F^X} \&\& {C((\ell^X)^F)} \\
	{T^{F^X}} \&\& {F^X} \&\& {F^{F^X}}
	\arrow["{\ell^{T^X}}", from=1-1, to=1-3]
	\arrow[from=1-1, to=2-2]
	\arrow["{(\ell^X)^T}"', from=1-1, to=3-1]
	\arrow["{r^{T^X}}", from=1-3, to=1-5]
	\arrow[equals, from=1-3, to=2-4]
	\arrow["{\ell^X}", from=1-3, to=3-3]
	\arrow[from=1-5, to=2-6]
	\arrow["{(\ell^X)^F}", from=1-5, to=3-5]
	\arrow[equals, from=1-7, to=1-8]
	\arrow[equals, from=1-7, to=2-7]
	\arrow["{\coker{\id{T^X}}}", from=1-8, to=1-9]
	\arrow["{\ell^X}", from=1-8, to=2-8]
	\arrow["w", from=1-9, to=2-9]
	\arrow["{\lambda_{\delta}^{1,1}}"{description}, from=2-2, to=1-7]
	\arrow["\gamma", from=2-2, to=2-4]
	\arrow[from=2-2, to=3-1]
	\arrow[from=2-4, to=2-6]
	\arrow["{\ell^X}", from=2-4, to=3-3]
	\arrow["{\lambda_{\delta}^{1,3}}"{description}, from=2-6, to=1-9]
	\arrow[from=2-6, to=3-5]
	\arrow["{\ell^X}", from=2-7, to=2-8]
	\arrow["u"', from=2-7, to=3-7]
	\arrow["{r^X}", from=2-8, to=2-9]
	\arrow["{r^X}"', from=2-8, to=3-8]
	\arrow[equals, from=2-9, to=3-9]
	\arrow["{\ell^X}", from=3-1, to=3-3]
	\arrow[from=3-1, to=4-2]
	\arrow["{(r^X)^T}"', from=3-1, to=5-1]
	\arrow["{r^X}", from=3-3, to=3-5]
	\arrow["{r^X}", from=3-3, to=4-4]
	\arrow["{r^X}"', from=3-3, to=5-3]
	\arrow[from=3-5, to=4-6]
	\arrow["{(r^X)^T}", from=3-5, to=5-5]
	\arrow["{\ker{\id{F^X}}}"', from=3-7, to=3-8]
	\arrow[equals, from=3-8, to=3-9]
	\arrow["{\lambda_{\delta}^{3,1}}"{description}, from=4-2, to=3-7]
	\arrow[from=4-2, to=4-4]
	\arrow[from=4-2, to=5-1]
	\arrow["\omega"', from=4-4, to=4-6]
	\arrow[equals, from=4-4, to=5-3]
	\arrow["{\lambda_{\delta}^{3,3}}"{description}, from=4-6, to=3-9]
	\arrow[from=4-6, to=5-5]
	\arrow["{\ell^{F^X}}"', from=5-1, to=5-3]
	\arrow["{r^{F^X}}"', from=5-3, to=5-5]
    \end{cd}

    We now prove that all pretorsion theories $(\C,\T,\F)$ yield pseudo-coalgebras for $\Oex$. Of course, $(\C,\T\cap \F)\in \ClIdlex$. We define the coalgebra morphism to be the same functor of \thex\ref{theorbih}
    \begin{fun}
	\lambda & \: & \C \hphantom{C} & \too & \hphantom{CC}\Ses{\C} \\[0.3ex]
    && \fib{X}{h}{Y} & \mto & \begin{cd}*[4][5.5]
{T^X} \& X \& {F^X} \\
	{T^Y} \& Y \& {F^Y}
	\arrow["{\ell^X}", from=1-1, to=1-2]
	\arrow["{h^T}"', from=1-1, to=2-1]
	\arrow["{r^X}", from=1-2, to=1-3]
	\arrow["h", from=1-2, to=2-2]
	\arrow["{h^F}", from=1-3, to=2-3]
	\arrow["{\ell^Y}"', from=2-1, to=2-2]
	\arrow["{r^Y}"', from=2-2, to=2-3]
    \end{cd}
    \end{fun}
    using the chosen short exact sequences and the torsion and torsion-free parts of morphisms given by the pretorsion theory. It is straightforward to show that $\lambda$ preserves exact sequences, using the characterization of exact sequences in $\Ses{\C}$ that we presented in \conx\ref{consomegaex}. 
    It is then straightforward to modify the natural isomorphism $\lambda_{\delta}$ that we constructed in \thex\ref{theorbih} to yield a pseudo-coalgebra $(\C,\T\cap \F,\lambda,\lambda_{\delta})$ for $\Oex$. 
\end{proof}

\begin{remark}
    The pseudo-coalgebras for $\Oex$ correspond to a generalization of pretorsion theories. Indeed, given a pseudo-coalgebra, its coalgebra map 
 \begin{fun}
	{\lambda} & \: & \C & \too & \Ses{\C} \\[1ex]
    && \fib{X}{h}{Y} & \mto &  
\begin{cd}*[4.5]
{T^X} \arrow[r,"{\ell^X}"] \arrow[d,"{h^T}"'] \& {X} \arrow[d,"{h}"] \arrow[r,"{r^X}"] \& {F^X} \arrow[d, "{h^F}"]\\
{T^Y} \arrow[r,"{\ell^Y}"'] \& {Y} \arrow[r,"{r^Y}"']\& {F^Y} 
\end{cd}
\end{fun}
is such that $(\ell^X)^F$ null and $(r^X)^T$ null for every $X\in \C$. Intuitively, this means that we do not necessarily associate to every object $X\in \C$ a short exact sequence involving a torsion object and a torsion-free object. Instead, we associate to $X$ a short exact sequence involving a torsion and a torsion-free morphism. 
We plan to further investigate these generalized pretorsion theories in future work.
\end{remark}

In the following, we describe a property that determines pretorsion theories among the pseudo-coalgebras for $\Oex$.

\begin{proposition}
    Pretorsion theories are precisely the pseudo-coalgebras for $\Oex$ such that the coalgebra map ${\lambda\: \C \to \Ses{\C}}$ preserves the short exact sequences in its image.
\end{proposition}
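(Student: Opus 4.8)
The plan is to show, for a pseudo-coalgebra $(\C,\N,\lambda,\lambda_{\delta})$ for $\Oex$, that $\lambda$ preserves the short exact sequences in its image if and only if $T^X\in\T$ and $F^X\in\F$ for every $X\in\C$, where $\T=\lambda^{-1}(\T_{\Ses{\C}})$ and $\F=\lambda^{-1}(\F_{\Ses{\C}})$; this latter condition is then exactly what turns the pseudo-coalgebra into the pseudo-coalgebra of a pretorsion theory. Recall that, by the counit triangle, $\lambda$ sends $X$ to a short exact sequence $T^X\ar{\ell^X}X\ar{r^X}F^X$ with $X$ in the middle, and that these are precisely the objects of $\Ses{\C}$ lying in the image of $\lambda$; so the hypothesis on $\lambda$ reads: for each $X\in\C$, the image $\lambda(T^X)\ar{\lambda(\ell^X)}\lambda(X)\ar{\lambda(r^X)}\lambda(F^X)$ is a short exact sequence in $\Ses{\C}$.

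For the implication from pretorsion theories, I start from a pretorsion theory $(\C,\T,\F)$; by \thex\ref{theoromegaex} its associated pseudo-coalgebra has $T^X\in\T$ and $F^X\in\F$. By naturality of $\ell$ and $r$, together with the facts that $\ell^X$ is mono and $r^X$ is epi, one gets $(\ell^X)^T=\ell^{T^X}$ and $(r^X)^F=r^{F^X}$, which are isomorphisms since $T^X\in\T$ and $F^X\in\F$; dually, $T^{F^X}=K(r^{F^X})$ and $F^{T^X}=C(\ell^{T^X})$ are null objects by \lemx\ref{lemmino}(i), so $(r^X)^T$ and $(\ell^X)^F$ are null, whence $\ker{(r^X)^T}=\id{T^X}$ and $\coker{(\ell^X)^F}=\id{F^X}$ by \lemx\ref{lemmino}(ii). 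An isomorphism into $T^X$ is therefore a kernel of $(r^X)^T$ and an isomorphism out of $F^X$ a cokernel of $(\ell^X)^F$, so $(\ell^X)^T=\ker{(r^X)^T}$ and $(r^X)^F=\coker{(\ell^X)^F}$; since also the middle row $T^X\ar{\ell^X}X\ar{r^X}F^X$ is short exact, \prox\ref{propcharkerses} yields that $\lambda(\ell^X)$ and $\lambda(r^X)$ form a short exact sequence in $\Ses{\C}$, i.e.\ $\lambda$ preserves the short exact sequences in its image.

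For the converse, I assume the hypothesis. Then, by the choice fixed in \remx\ref{remchoice}, no short exact replacement is needed, so $\Oex(\lambda)(\lambda(X))$ is literally the $3\times3$ diagram $\lambda(T^X)\to\lambda(X)\to\lambda(F^X)$, and the component of $\lambda_{\delta}$ at $X$ is an isomorphism in $\Ses{\Ses{\C}}$ between this diagram and $\delta_{\C}(\lambda(X))$, exactly as in the proof of \thex\ref{theorbih}. The counit-compatibility axiom for $\lambda_{\delta}$ (and its dual) then forces, verbatim as there, that $\lambda_{\delta}$ is the identity off the four corners and that $\ell^{T^X}=\lambda_{\delta}^{1,1}$ and $r^{F^X}=\lambda_{\delta}^{3,3}$ are isomorphisms, i.e.\ $T^X\in\T$ and $F^X\in\F$. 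With $\T=\lambda^{-1}(\T_{\Ses{\C}})$ and $\F=\lambda^{-1}(\F_{\Ses{\C}})$ full replete subcategories, one checks as in \thex\ref{theorbih} (using \remx\ref{remcharnull} and the fact that morphisms of $\ClIdlex$ preserve null objects) that $\T\cap\F$ is the class of null objects of $\C$, so the ideal it generates is $\N$; axiom (T1) holds for \emph{any} pseudo-coalgebra, since if $A\in\T$ and $B\in\F$ then $T^B$ is null, so $h^T$ is null and $h\c\ell^A=\ell^B\c h^T$ is null with $\ell^A$ invertible, forcing $h$ null; and axiom (T2) is witnessed by $\lambda(X)=(T^X\ar{\ell^X}X\ar{r^X}F^X)$ precisely because now $T^X\in\T$ and $F^X\in\F$. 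Hence $(\C,\T,\F)$ is a pretorsion theory.

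It then remains to observe that these two passages are mutually inverse, so that the correspondence of \thex\ref{theoromegaex} identifies pretorsion theories with the pseudo-coalgebras satisfying the hypothesis: choosing the short exact sequences $\lambda(X)$ for $(\C,\T,\F)$ recovers $\lambda$ on objects, the torsion and torsion-free parts of a morphism are uniquely determined by the mono $\ell^Y$ and the epi $r^X$, and $\lambda_{\delta}$ is uniquely determined by $\ell^{T^X}$ and $r^{F^X}$, exactly as in \thex\ref{theorbih}; 2-functoriality and the behaviour on morphisms and 2-cells are inherited from that proof. I expect the main obstacle to be the bookkeeping in the converse: making precise that the hypothesis is exactly what strips the short exact replacement from $\Oex(\lambda)\c\lambda$, so that the delicate unpacking of the pseudo-coalgebra axioms carried out in the proof of \thex\ref{theorbih} can be reused without change to extract $T^X\in\T$ and $F^X\in\F$.
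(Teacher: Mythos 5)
Your proof is correct and follows essentially the same route as the paper: the choice fixed in \remx\ref{remchoice} makes $[\lambda]\circ\lambda$ coincide with the composite from \thex\ref{theorbih}, so the analysis of $\lambda_\delta$ there yields $T^X\in\T$ and $F^X\in\F$, while the converse uses that the sequences in the image of $\lambda$ have a torsion object on the left and a torsion-free object on the right. The only difference is that you spell out, via \prox\ref{propcharkerses}, the converse direction that the paper dismisses as straightforward, and your details there are accurate.
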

\begin{proof}
    If $\lambda$ preserves the short exact sequences in its image, then the composite $\C\ar{\lambda}\Ses{\C}\ar{[\lambda]}\Ses{\Ses{\C}}$ coincides with $\C\ar{\lambda}\Ses{\C}\ar{\lambda}\Ses{\Ses{\C}}$ as defined in \thex\ref{theorbih}, thanks to the choice we made in \remx\ref{remchoice} (see also \conx\ref{consomegaex}). So the natural isomorphism $\lambda_{\delta}$ guarantees that
    $$T^{T^X}\aiso{\lambda_{\delta}^{1,1}} T^X \quad \text{ and } \quad F^{T^X} \text{ null object}$$
    $$F^{F^X}\aiso{\lambda_{\delta}^{3,3}} F^X \quad \text{ and } \quad T^{F^X} \text{ null object}$$
    And we obtain a pretorsion theory. 

    It is straightforward to prove the converse, using that the short exact sequences in the image of $\lambda$ have a torsion object on the left and a torsion-free object on the right. 
\end{proof}

\begin{remark}
We observe that in this setting we have the following quintuple of adjoint functors:
\begin{cd}[18][18]
{\Ses{\C}} \arrow[r,"{\varepsilon_{\C}}"'{name=C}] \arrow[r,"{\pi_3}"{name=A}, bend left=75] \arrow[r,"{\pi_1}"', " "{pos=0.556, name=E}, bend right=75] \& {\C} \arrow[l,"{\bullet = \bullet \to \bullet}", " "{name=D, pos=0.495}, bend left=32]  \arrow[l,"{\bullet \to \bullet = \bullet}"',""'{name=B,pos=0.55},""{name=F,pos=0.45}, bend right=32]  \arrow[from=A,to=B, "{}", adj] \arrow[from=F,to=C, "{}", adj, shift right=1.3ex, yshift=0.31ex] \arrow[from=C,to=D, "{}", adj, shift right=1.3ex,yshift=0.51ex] \arrow[from=D,to=E, "{}", adj, shift right=1.3ex, yshift=-0.51ex] 
\end{cd}
With some work, it is possible to characterize pretorsion theories among the pseudo-coalgebras for $\Oex$ in terms of equations that only involve these adjoints. 

In future work, we plan to further analyze the properties of general comonads that can be equipped with a diagram of five adjoints like that of the picture above.
\end{remark}




\bibliographystyle{abbrv}
\bibliography{bibliography}


\end{document}